\documentclass{amsart}
\usepackage{amssymb,amsmath,amsrefs}
\usepackage[colorlinks=true]{hyperref}
\usepackage[T1]{fontenc}
\hypersetup{urlcolor=blue, citecolor=green}
\usepackage[dvips]{graphicx}
\usepackage{color}
\usepackage{svg}
\usepackage{subcaption}
\usepackage{float}
\usepackage{shellesc}
\usepackage{tabularx}
\usepackage{enumitem}

\graphicspath{{Imagens/}}
\theoremstyle{plain}

\newtheorem*{conj*}{Conjecture}
\newtheorem*{cor*}{Corollary}
\newtheorem{theorem}{Theorem}[section]

\newtheorem{proposition}[theorem]{Proposition}
\newtheorem{corollary}[theorem]{Corollary}
\newtheorem{lemma}[theorem]{Lemma}

\newtheorem{theoremm}{Theorem}

\theoremstyle{definition}
\newtheorem*{def*}{Definition}
\newtheorem{remark}[theorem]{Remark}

\newtheorem{definition}[theorem]{Definition}

\newcommand{\SR}{{\mathcal R}}

\newcommand{\SU}{{\mathcal U}}

\newcommand{\com}{\operatorname{Comp}}
\newcommand{\Z}{\mathbb{Z}}
\newcommand{\N}{\mathbb{N}}
\newcommand{\R}{\mathbb{R}}
\newcommand{\eps}{\varepsilon}

\newcommand{\dist}{\operatorname{\textit{d}}}

\newcommand{\const}{\operatorname{const}}

\newcommand{\diff}{\operatorname{Diff}}

\newcommand{\diam}{\operatorname{diam}}

\DeclareMathOperator{\image}{Im}

\makeatletter
\newcommand{\tpitchfork}{%
  \vbox{
    \baselineskip\z@skip
    \lineskip-.52ex
    \lineskiplimit\maxdimen
    \m@th
    \ialign{##\crcr\hidewidth\smash{$-$}\hidewidth\crcr$\pitchfork$\crcr}
  }%
}
\makeatother

\author{Alfonso Artigue}
\author{Bernardo Carvalho}
\author{José Cueto}

\title[$C^1$-generic continuum-wise expansive surface diffeomorphisms]{$C^1$-generic continuum-wise expansive surface diffeomorphisms}
\begin{document}

\renewcommand{\thefootnote}{}

\footnote{2020 \emph{Mathematics Subject Classification}: Primary 37D20; Secondary 37C05.}

\footnote{\emph{Key words and phrases}: Continuum-wise, Expansiveness, generic, diffeomorphisms}

\renewcommand{\thefootnote}{\arabic{footnote}}
\setcounter{footnote}{0}

\begin{abstract}
We exhibit a local residual set of surface $C^1$ diffeomorphisms that are continuum-wise expansive but are not expansive. We also exhibit an open and dense set of surface $C^1$ diffeomorphisms where expansiveness implies being Anosov.
\end{abstract}

\maketitle

\section{Introduction}

The concept of hyperbolicity is central in the theory of differentiable dynamical systems. It appeared in the 1960’s with the works of Dmitry Anosov \cite{An} and Stephen Smale \cite{S} and has been the main subject of research among many mathematicians since then. A general goal of the theory is to obtain a classification of most $C^1$ diffeomorphisms, where by most we mean generic (diffeomorphisms on a residual subset of the set of all diffeomorphisms $\diff^1(M)$ of a closed smooth manifold endowed with the $C^1$ topology). For some time it was expected that the hyperbolic systems were most of the $C^1$ diffeomorphisms, but this was proved to be false by R. Abraham and Stephen Smale \cite{AS} and also by Carl Simon \cite{Si}. They proved that hyperbolicity is not $C^1$ dense in the set of all diffeomorphisms of manifolds with dimension greater than two. In the case of surface diffeomorphisms, Sheldon Newhouse proved the non-density of hyperbolicity \cite{newhouse} and exhibited open sets of diffeomorphisms where a generic system possesses an infinite number of coexisting sinks \cite{newhouse2}. 

The works of Ricardo Mañé were fundamental for the theory culminating with the proof of the Stability Conjecture \cite{manhe} proposed by Jacob Palis and Stephen Smale \cite{palissmale} obtaining that hyperbolicity is a necessary feature of stable systems. He proved a dichotomy \cite{manhe4} which states that a $C^1$-generic surface diffeomorphism is either hyperbolic, i.e. Axiom A and satisfies the no-cycle condition, or exhibits the Newhouse phenomenon, admitting either infinitely many sinks or infinitely many sources. Mañé also discussed the relation between hyperbolicity and expansiveness, proving that the $C^1$ interior of the set of expansive diffeomorphisms equals the set of quasi-Anosov diffeomorphisms \cite{manhe2}. In the case of surface diffeomorphisms, being quasi-Anosov is equivalent to being Anosov \cite{manhe}, so this result implies that there is no open sets of expansive surface diffeomorphisms outside the set of Anosov diffeomorphisms. This highlights expansiveness as a fundamental property of hyperbolic diffeomorphisms and raises the question of whether there exist expansive diffeomorphisms beyond the closure of the set of (quasi-)Anosov diffeomorphisms.

In words, expansiveness means that the iterates of distinct points of the space must eventually separate from each other some uniform distance. This ensures strong relations between the dynamics of the system and the topology of the space. Koichi Hiraide \cite{hiraide} and Jorge Lewowicz \cite{lewowicz} explored this idea and obtained a classification result of expansive homeomorphisms of surfaces: they are either topologically conjugate to Anosov diffeomorphisms of the Torus $\mathbb{T}^2$, or are Pseudo-Anosov homeomorphisms of higher genus surfaces. However, a classification of the set of expansive diffeomorphisms with respect to hyperbolicity is still unknown.

Alexander Arbieto proved in \cite{arbieto1} that $C^1$-generic expansive diffeomorphisms are hyperbolic (Axiom A and satisfy the no-cycle condition) adapting the proof of Shaobo Gan and Dawei Yang in the case of generic expansive homoclinic classes \cite{GY}. This means that the set of expansive diffeomorphisms that are not hyperbolic is a meager set (complement of a residual) but it is still an open problem to determine whether $C^1$-generic expansive diffeomorphisms are necessarily quasi-Anosov, that is, whether the set of expansive and hyperbolic diffeomorphisms that are not quasi-Anosov is a meager set. We will discuss this problem in more detail in a subsequent work but we obtain the following result for surface diffeomorphisms.

\begin{theoremm}\label{theorem:1}
Let $M$ be a closed surface and $\mathcal{A} \subset \mathrm{Diff}^1(M)$ be the set of Anosov diffeomorphisms. Then there exists an open and dense subset $\mathcal{B} \subset \mathrm{Diff}^1(M) \setminus \overline{\mathcal{A}}$ such that no diffeomorphism in $\mathcal{B}$ is expansive.
\end{theoremm}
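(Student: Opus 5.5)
A natural route is to use the structure of $C^1$-generic surface dynamics together with a robust mechanism that destroys expansiveness. Expansive homeomorphisms of surfaces have no sinks or sources, since a sink has a neighborhood whose points all converge together in forward time. Robustly, however, they need not fail this way. Instead we aim for a set that is open and dense in $\mathrm{Diff}^1(M)\setminus\overline{\mathcal{A}}$ on which each diffeomorphism has either a periodic sink or source, or some robust obstruction to expansiveness.

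By the Hiraide--Lewowicz classification, an expansive surface homeomorphism is conjugate to an Anosov diffeomorphism of $\mathbb{T}^2$ or is pseudo-Anosov. In either case it is transitive (indeed topologically mixing) and $\Omega(f)=M$.

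\textbf{Step 1.} Let $\mathcal{B}_1$ be the set of $f\in \mathrm{Diff}^1(M)\setminus\overline{\mathcal{A}}$ having a hyperbolic periodic sink or source. This set is open, and no element of it is expansive.

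\textbf{Step 2.} Consider $f$ in the interior of the complement of $\overline{\mathcal{B}_1}$ within $\mathrm{Diff}^1(M)\setminus\overline{\mathcal{A}}$. Then $f$ is robustly without sinks and sources, so it lies in the set $\mathcal{F}^1$ of diffeomorphisms whose periodic points are robustly hyperbolic; a sink or source could otherwise be created by a small perturbation. By Mañé's results on surfaces, $\mathcal{F}^1$ coincides with the interior of Axiom A with no cycles, so $f$ is $\Omega$-stable and Axiom A. We then need a robust nonexpansiveness property for such $f$.

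Since $f$ has no sinks or sources, every basic set is of saddle type. Suppose $\Omega(f)=M$. Then $M$ is hyperbolic, so $f$ is Anosov, which contradicts $f\notin\overline{\mathcal{A}}$. Hence $\Omega(f)\neq M$, and this persists in a neighborhood by $\Omega$-stability, because $\Omega(g)$ stays close to $\Omega(f)$, which is a proper closed set. A diffeomorphism with $\Omega(g)\neq M$ cannot be expansive, because expansive surface homeomorphisms satisfy $\Omega=M$ by the classification above.

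\textbf{Step 3.} Set $\mathcal{B}=\mathcal{B}_1\cup \operatorname{int}\big(\mathrm{Diff}^1(M)\setminus(\overline{\mathcal{A}}\cup\overline{\mathcal{B}_1})\big)$. This set is open, and it is dense in $\mathrm{Diff}^1(M)\setminus\overline{\mathcal{A}}$ because $\mathcal{B}_1\cup\operatorname{int}(\text{complement of }\overline{\mathcal{B}_1})$ is always open and dense. Steps 1 and 2 show that no element of $\mathcal{B}$ is expansive.

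The main obstacle is Step 2. It needs two things: the identification of robustly sink/source-free surface diffeomorphisms as Axiom A with no cycles, which follows from Mañé's $\mathcal{F}^1$ theorem together with Hayashi's connecting lemma in dimension two, and the fact that Axiom A with $\Omega=M$ on a surface forces Anosov. Both are deep inputs that I would cite rather than reprove. Alternatively, Step 2 could avoid classification: Axiom A with no cycles and $\Omega\neq M$ means some basic set $\Lambda$ is a nontrivial attractor or repeller of saddle type. On a surface with no sinks, a saddle attractor that is not all of $M$ has boundary periodic points. Accessible boundary arcs of its unstable manifolds would then give nondegenerate continua with small forward and backward orbits, contradicting expansiveness directly. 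I would try the classification route first.
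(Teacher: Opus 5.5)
Your main argument is correct. It produces the same set as the paper, but it proves density differently.

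\textbf{Same set.} The paper takes $\mathcal{B}=\mathcal{C}\cup(\mathcal{F}^1(M)\setminus\overline{\mathcal{A}})$, where $\mathcal{C}$ is the open set of diffeomorphisms having a sink or a source. Your $\mathcal{B}_1$ is exactly $\mathcal{C}$, since a diffeomorphism with a sink or source cannot lie in $\overline{\mathcal{A}}$. Franks' lemma puts the exterior of $\mathcal{C}$ inside $\mathcal{F}^1(M)$. Conversely, by $\Omega$-stability a star diffeomorphism without sinks or sources is robustly so. Hence your $\mathcal{B}$ coincides with the paper's.

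\textbf{Different density argument.} The paper invokes Mañé's generic trichotomy: on a residual set a diffeomorphism has infinitely many sinks, infinitely many sources, or is Axiom A with no cycles. So every $f\notin\overline{\mathcal{A}}$ is approximated by elements of $\mathcal{C}$ or of $\mathcal{F}^1(M)\setminus\overline{\mathcal{A}}$. You instead use the purely topological fact that an open set together with its exterior is open and dense. You then place the exterior inside $\mathcal{F}^1(M)$ using Franks' lemma and Mañé's theorem that $\mathcal{F}^1(M)$ is Axiom A with no cycles on surfaces. Your route avoids the residual-set machinery, which is itself built from these same ingredients, and shows that only the $\mathcal{F}^1$-theorem is really needed. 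The paper's route is shorter if Corollary II is taken as a black box. The final step is identical in both: Axiom A with $\Omega\neq M$ is not Anosov, and the Hiraide--Lewowicz classification forces $\Omega=M$ for expansive surface homeomorphisms.

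\textbf{Caveat on sinks.} Your stated reason why a sink destroys expansiveness, that nearby points converge in forward time, is not an argument by itself. Those points separate in backward time. On general compact metric spaces, expansive homeomorphisms can have attracting fixed points, for instance $n\mapsto n+1$ on the two-point compactification of $\mathbb{Z}$. On surfaces the correct reason is the one you already use in Step 2. The basin of a sink minus its orbit consists of wandering points, so $\Omega\neq M$, which the classification forbids.

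\textbf{Caveat on the fallback route.} Your fallback for Step 2 would fail. It is false that a proper saddle-type attractor yields nondegenerate continua with arbitrarily small two-sided orbits. The paper's $f_0$ on the bitorus is Axiom A with no cycles, has no sinks or sources, and has $\Omega(f_0)\neq M$. Yet it is $2$-expansive, hence cw-expansive. The same holds for every element of the residual set $\mathcal{R}$ in the main theorem: those maps are also cw-expansive. For such maps no nondegenerate continuum has uniformly small orbit, so non-expansiveness cannot be detected by continua at all. You must rely on the classification route.
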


The main result of this article solves an analogous problem for the generalization of expansiveness called \emph{continuum-wise expansiveness}. This property was introduced by Hisao Kato in \cite{kato1}, \cite{kato2} and systems satisfying it may exhibit even a Cantor set of distinct initial conditions whose orbits do not separate, as in the case of the pseudo-Anosov diffeomorphism of $\mathbb{S}^2$ \cite{Walters}, but the diameter of the iterates of non-trivial continua (compact and connected subsets) necessarily increase a uniform amount. We are far from obtaining a classification of cw-expansive homeomorphisms and new examples have been appearing in recent years \cite{AAV}, \cite{Artanom}, \cite{ArtigueDend}, \cite{Artrobn}, \cite{APV}, \cite{CC} (see also the articles about cw-hyperbolicity \cite{ACS}, \cite{ACCV3}, \cite{CR}, \cite{COR}).

Regarding cw-expansive diffeomorphisms, Kazuhiro Sakai \cite{sakai0} proved that the $C^1$-interior of the set of cw-expansive diffeomorphisms coincides with the set of quasi-Anosov diffeomorphisms (which equals the $C^1$-interior of the set of expansive diffeomorphisms). Also, Manseob Lee \cite{lee} proved that $C^1$-generic cw-expansive diffeomorphisms are Axiom A and satisfy the no-cycle condition. We consider the problem of determining whether $C^1$-generic cw-expansive diffeomorphisms are necessarily quasi-Anosov (Anosov in the case of surface diffeomorphisms) and prove, in this article, that this is not the case. We exhibit a local residual set of surface $C^1$ diffeomorphisms that are continuum-wise expansive but are not expansive. This is the content of the main result of this article.

\begin{theoremm}\label{theorem:main_cw_exp1}
There exists a local residual set $\SR$ of $C^1$ diffeomorphisms of the bitorus where each element is $cw$-expansive and not expansive.
\end{theoremm}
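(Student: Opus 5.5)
The plan is to start from a pseudo-Anosov homeomorphism of the bitorus and smooth it into a diffeomorphism whose nonwandering set is hyperbolic (Axiom A) but which has a tangency between stable and unstable foliations. Concretely, take a pseudo-Anosov map $P$ of the genus-two surface with, say, two $6$-pronged singularities. Following the standard Derived-from-Anosov/blow-up construction (in the spirit of the cw-expansive examples in \cite{Artanom}, \cite{APV}), I would replace each singular point by a small region: inside it, a hyperbolic periodic source or sink configuration arranged so that the resulting diffeomorphism $f_0$ is Axiom A with no cycles. The basic pieces would be one nontrivial hyperbolic attractor (or saddle piece) and finitely many periodic repellers and attractors. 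The key geometric feature is that near the former singularities, stable and unstable manifolds of the saddle piece meet tangentially, with only \emph{finite order, isolated} tangencies. Expansiveness will then fail because two points on a common arc of tangency have orbits that stay close forever: at a quadratic tangency between $W^s(x)$ and $W^u(y)$ one gets a Cantor set or a pair of distinct points whose orbits never separate. By contrast, no nontrivial continuum can stay small, since a continuum must cross stable or unstable leaves transversally except at isolated points.

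Next I define the local residual set. Take a small $C^1$ neighborhood $\mathcal U$ of $f_0$ in which the hyperbolic pieces persist by structural stability of the basic sets, although global structural stability fails because of the tangency. Inside $\mathcal U$, intersect Kupka--Smale diffeomorphisms with a residual set on which tangencies between the invariant manifolds of the persistent basic sets are isolated and quadratic. I would also add the generic conditions used by Lee \cite{lee}, so that cw-expansive generic maps are Axiom A without cycles. I would also arrange that the tangency persists: a tangency with a nontrivial hyperbolic set of large thickness (Newhouse) would give a robust tangency. Newhouse's result, however, requires $C^2$ and produces infinitely many sinks, which is bad. I therefore prefer to let the tangency be of \emph{topological} nature, forced by the prong structure: on the bitorus the Euler characteristic forces the stable and unstable laminations of any map isotopic to $P$ with a single Axiom A attractor to have non-transverse contact. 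This would give non-expansiveness for every map in $\mathcal U$, via a Hiraide--Lewowicz argument: an expansive surface homeomorphism of the bitorus is pseudo-Anosov, and a diffeomorphism with hyperbolic periodic points cannot be conjugate to one with singular prongs. So every $f\in\mathcal U$ is non-expansive, using only the classification of expansive surface homeomorphisms \cite{hiraide}, \cite{lewowicz} and the fact that $f$ has hyperbolic saddles and sinks or sources that a pseudo-Anosov map lacks.

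Finally, I prove cw-expansiveness for generic $f\in\mathcal U$. Let $C$ be a continuum with $\diam f^n(C)\le\eps$ for all $n$. By hyperbolicity of the Axiom A pieces and the no-cycle condition, $C$ lies in local stable and unstable sets of basic pieces, so $C\subset W^s_\eps\cap W^u_\eps$ of the nonwandering set. Locally these are $C^1$ curves, and their intersection is totally disconnected when all tangencies are isolated, which is the generic condition. Hence $C$ is a point. The main obstacle is this last step, namely controlling the local stable and unstable sets near the tangency region, uniformly in $f\in\mathcal U$, to get a cw-expansivity constant. To handle it, I would use a Lyapunov-type function or cone field outside a small neighbourhood of the tangencies, and use the isolatedness of tangencies inside that neighbourhood.
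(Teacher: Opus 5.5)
There is a genuine gap, and it starts with your choice of $f_0$. You deliberately include periodic sources and sinks among the basic pieces, and these destroy cw-expansiveness robustly.

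Take a DA-type blow-up of a singularity into a source $q$. The punctured disc $W^u(q)\setminus\{q\}$ is filled by one-dimensional stable leaves of the nontrivial piece. Take a short arc $C$ of such a leaf. Backward, $f^{-n}(C)\to q$, and $\{f^{-n}\}_{n\ge 0}$ is equicontinuous on compact parts of $W^u(q)$. Forward, $f^{n}(C)$ stays short along the stable leaf. So, given any $c>0$, $C$ can be chosen as a nondegenerate continuum with $\diam f^n(C)<c$ for all $n$. The symmetric argument applies to a sink receiving unstable leaves of a saddle piece. Since hyperbolic sources and sinks persist, no map in your $\mathcal U$ is cw-expansive.

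Worse, your configuration has no tangencies at all. For an Axiom A map, a point of $W^s(\Lambda)\cap W^u(\Lambda)$ is non-wandering, hence lies in $\Lambda$, where $E^s\oplus E^u$ is a splitting. Any intersection with the two-dimensional basin of a sink, or the unstable set of a source, is automatically transverse. So with one nontrivial attractor or saddle piece plus periodic sinks and sources, $f_0$ satisfies strong transversality. Your claim that the Euler characteristic forces non-transverse contact cannot hold. (Also, $\sum_i(1-k_i/2)=-2$ on the genus-two surface rules out two $6$-pronged singularities.)

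The missing idea is the one behind the paper's $f_0$. Cut out the source of a DA on one torus and the sink of an inverse DA on another, and glue along an annulus via $x\mapsto x/\|x\|^2$. Then $\Omega(f_0)$ is a nontrivial repeller plus a nontrivial attractor, with no sinks or sources. In the wandering annulus, the one-dimensional stable and unstable foliations (horizontal versus dipole) are tangent along curves, robustly for $g$ near $f_0$. Your non-expansiveness argument survives this change: all periodic points of such $g$ are hyperbolic saddles, whereas a pseudo-Anosov map of the bitorus has a periodic singularity with at least three prongs. The paper argues instead that $\Omega(g)\neq M$, while pseudo-Anosov maps are transitive.

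The genericity step is asserted rather than proved, and it is the real content of the theorem. ``Isolated and quadratic tangencies'' is not a workable $C^1$-generic condition. For $C^1$ maps the leaves are only $C^1$, so order of contact is meaningless. Kupka--Smale controls only the countably many invariant manifolds of periodic points, whereas here the tangency locus of the laminations is robustly a union of curves. What must be shown is that, generically, $W^s_c(x)\cap W^u_c(x)$ contains no arc for every $x$. In the $C^1$ topology, arcs of coincidence can be created by arbitrarily small perturbations, so this does not come for free.

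The paper writes $\mathcal R=\bigcap_n F_n$, where $F_n$ requires every component of $\Gamma^g_c(x)$ to have diameter $<1/n$. Openness of $F_n$ comes from upper semicontinuity of dynamical balls and connectedness of limits of continua. Density comes from an explicit perturbation:
\begin{enumerate}
\item Approximate by a $C^2$ map, so that stable leaves in a box of the annulus have curvature bounded by some $K$.
\item Compose with $C^1$-small maps supported in thin rectangles $R_j$ of a fundamental domain, where each orbit crosses at most once.
\item These maps leave stable leaves unchanged and bend every unstable leaf to curvature $>K$ inside each $R_j$.
\item Hence coincidence arcs are chopped into pieces of length $<1/n$.
\end{enumerate}
A cone field or Lyapunov function away from the tangencies does not provide this. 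Your final step (a continuum with small iterates lies in $W^s_\eps\cap W^u_\eps$, which is totally disconnected) presupposes exactly this missing density statement.
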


The construction of this local residual set starts with the example of a 2-expansive (and not expansive) homeomorphism of the bitorus introduced in \cite{APV}, which is hyperbolic and has a pair of stable/unstable foliations with tangencies, then for each $n\in\N$ we consider sufficiently close diffeomorphisms where all curves of tangencies between the respective pairs of stable/unstable foliations have diameter less than $1/n$, note that when a diffeomorphism satisfies this for every $n\in\N$ it is necessarily cw-expansive, and then we discuss how to perform $C^1$ perturbations to break all possible curves of tangencies into pieces with arbitrarily small diameter. This idea is based in techniques provided by Alfonso Artigue in \cite{artigue2} where a $C^0$-residual subset of almost cw-expansive homeomorphisms on surfaces is constructed.

\section{\texorpdfstring{$C^1$-generic expansive surface diffeomorphisms}{C1-generic expansive surface diffeomorphisms}}

In this section, we begin with the precise definitions we will need during this article and then we prove Theorem \ref{theorem:1}. In what follows, $(X,d)$ is a compact metric space and $f\colon X\to X$ is a homeomorphism.

\begin{definition}[Local stable/unstable sets]\label{loc}
For each $x\in X$ and $c>0$, let 
$$W^s_{c}(x):=\{y\in X; \,\, d(f^k(y),f^k(x))\leq c \,\,\,\, \textrm{for every} \,\,\,\, k\geq 0\}$$
be the \emph{c-stable set} of $x$ and
$$W^u_{c}(x):=\{y\in X; \,\, d(f^k(y),f^k(x))\leq c \,\,\,\, \textrm{for every} \,\,\,\, k\leq 0\}$$
be the \emph{c-unstable set} of $x$. We consider the \emph{stable set} of $x\in X$ as the set 
$$W^s(x):=\{y\in X; \,\, d(f^k(y),f^k(x))\to0 \,\,\,\, \textrm{when} \,\,\,\, k\to\infty\}$$
and the \emph{unstable set} of $x$ as the set 
$$W^u(x):=\{y\in X; \,\, d(f^k(y),f^k(x))\to0 \,\,\,\, \textrm{when} \,\,\,\, k\to-\infty\}.$$
\end{definition}

\begin{definition}[Expansiveness]
For each $x\in X$ and $c>0$ let $$\Gamma_{c}(x)=W^u_{c}(x)\cap W^s_{c}(x)$$ be the \emph{dynamical ball} of $x$ with radius $c$. We say that $f$ is \emph{expansive} if there exists $c>0$ such that $$\Gamma_c(x)=\{x\} \,\,\,\,\,\, \text{for every} \,\,\,\,\,\, x\in X.$$ A constant $c$ satisfying the above definition is called an expansivity constant of $f$.
\end{definition}

Expansiveness was introduced by Utz in \cite{Utz}, explored by many authors since then, and gives information on topological and statistical aspects of chaotic systems. Many features of expansive homeomorphisms are known to these days and we invite the authors to read the monograph \cite{AH} for more information.

%\begin{definition}\label{bola_dinamica1}
%For a homeomorphism $g\colon S \to S$, $x\in S$, and $\varepsilon > 0$, we define the dynamical ball of radius $\varepsilon$ centered on $x$ as
%$$\Gamma^g_\varepsilon(x)=\{y\in S\colon\ d(g^n(x),g^n(y))\leq\varepsilon, \mathrm{\  for\  all\  } n\in\mathbb{Z} \}.$$ We say that $g$ is \emph{expansive} if there exists $\eps>0$ such that 
%$$\Gamma^g_\varepsilon(x)=\{x\} \,\,\,\,\,\, \text{for every} \,\,\,\,\,\, x\in X.$$ 
%\end{definition}

%\begin{definition}[\cite{manhe4}]\label{definition:lower_upper_semicontinuous}
%
%We say that a map \( \Gamma : X \to \mathcal{H}(M) \) is \emph{lower semicontinuous} in a point \( x \in X \) if for every open set \( U \) that intersects \( \Gamma(x) \) there exists a neighborhood \( \mathcal{U} \) of \( x \) in \( X \) such that if \( y \in \mathcal{U} \) then \( \Gamma(y) \cap U \neq \emptyset \). We say that \( \Gamma \) is \emph{upper semicontinuous} in \( x \) if for every neighborhood \( U \) of \( \Gamma(x) \) there exists a neighborhood \( \mathcal{U} \) of \( x \) in \( X \) such that \( y \in \mathcal{U} \) implies \( \Gamma(y) \subset U \). If $\Gamma$ is both upper and lower semi-continuous at a point $x_0\in X$, then we simply say that $\Gamma$ is continuous at $x_0\in X$.
%\end{definition}

\begin{definition}[Periodic and non-wandering points]
A periodic point is a point $p\in X$ such that there is $k\in\N$ satisfying $f^k(x)=x$. The period of $p$ is defined as the smallest number $k\in\N$ satisfying $f^k(x)=x$. The set of periodic points of $f$ is denoted by $Per(f)$. A point \( z\in X \) is called \emph{non-wandering for \( f \)} if for every neighborhood \( U \) of \( z \) there is \( n \geq 1 \) such that \( f^n(U) \) intersects \( U \). The set of non-wandering points of $f$ is denoted by \( \Omega(f) \). It contains the set \( \text{Per}(f) \) of periodic points of $f$, as well as the \( \alpha \)-limit set and the \( \omega \)-limit set of every orbit. 
\end{definition}

\begin{definition}[Sinks and Sources]
Let $M$ be a closed smooth manifold, $TM$ denotes its tangent bundle, $f\colon M\to M$ be a $C^1$ diffeomorphism, and $Df\colon TM\to TM$ be its derivative map. We say that $p\in Per(f)$ is a \textit{sink} if each eigenvalue of the derivative $Df^\nu(p)$ has absolute value less than one, where $\nu$ is the period of $p$. Similary, we say that $p$ is a \textit{source} if each eingenvalue of the derivative $Df^\nu(p)$ has absolute value great than one.
\end{definition}

Hyperbolic diffeomorphisms are defined as being Axiom A and satisfying the no-cycle condition.

\begin{definition}[Axiom A diffeomorphisms]
Axiom A diffeomorphisms are the ones whose non-wandering set is hyperbolic and is the closure of the set of periodic points. A hyperbolic set $\Lambda\subset M$ is a compact and invariant set $f(\Lambda)=\Lambda$ such that there is an invariant splitting of the tangent bundle $T\Lambda=E^s\oplus E^u$ and constants $c>0$ and $\lambda\in(0,1)$ such that 
$$\|Df^k_{|E^s}\|\leq c\lambda^k \,\,\, \textrm{and}
      \,\,\, \|Df^{-k}_{|E^u}\|\leq c\lambda^k \,\,\,\,\,\, \text{for every} \,\,\,\,\,\, k\in\N,$$
where $\|.\|$ is a Riemannian norm on $TM$.
\end{definition}

It was proved by Smale in \cite{S} that Axiom A diffeomorphisms satisfy the Spectral Decomposition Theorem which ensures that the non-wandering set is the union of a finite number of disjoint, compact, invariant, and transitive sets $\Delta_1, \dots, \Delta_n$ called basic sets. 

\begin{definition}[No-cycle condition]\label{cycle_condition}
We say that $f$ has a \emph{cycle} between these basic sets if there exist $\Delta_{i_1}, \dots, \Delta_{i_{k-1}}, \Delta_{i_k}=\Delta_{i_1}$ such that 
$$W^u(\Delta_{i_j}) \cap W^s(\Delta_{i_{j+1}}) \neq \emptyset, \quad \forall \,\, 1 \leq j \leq k-1.$$ When this intersection holds, we write $\Delta_{i_j}\gg\Delta_{i_{j+1}}$. We say that an Axiom A diffeomorphism satisfies the no-cycle condition if there are no cycles between all basic sets of $f$. 
\end{definition}

The Axiom A diffeomorphisms satisfying the no-cycle condition are characterized as being star, that is, they form exactly the $C^1$ interior of the set of diffeomorphisms whose periodic orbits are all hyperbolic denoted by $\mathcal{F}^1(M)$ (see \cite{hayashi}). The $C^1$ interior of the set of expansive diffeomorphisms is characterized as the set of quasi-Anosov diffeomorphisms (see \cite{lewowicz_cerminara}, \cite{manhe2}, and \cite{Ma3}).

\begin{definition}[Quasi-Anosov diffeomorphism]
We say that a diffeomorphism $f\colon M\to M$ is a \emph{quasi-Anosov diffeomorphism} if the sequence $(\|Df^n(x)(v)\|)_{n\in\Z}$ is not bounded for every $x\in M$ and $v\in T_xM\setminus\{0\}$.
\end{definition}
 
Equivalent definitions were obtained in \cite{manhe2} and quasi-Anosov diffeomorphisms can be defined as the hyperbolic diffeomorphisms satisfying the quasi-transversality condition.

\begin{definition}[Quasi-transversality condition]
We say that a hyperbolic diffeomorphism satisfies the \emph{quasi-transversality} condition if $$T_x W^s(x) \cap T_x W^u(x) = \{0\} \,\,\,\,\,\, \text{for every} \,\,\,\,\,\, x\in M.$$
\end{definition}

This means that all stable and unstable manifolds do not have common tangent directions. We recall the definition of the strong-transversality condition and that hyperbolic diffeomorphisms satisfying the strong-transversality condition are necessarily structurally stable \cite{robinson_2}.

\begin{definition}[Strong-transversality condition]
We say that a hyperbolic diffeomorphism $f\colon M\to M$ satisfies the \emph{strong-transversality} condition if the stable manifold $W^s(x)$ and the unstable manifold $W^u(x)$ are transversal for every $x \in M$. This means that at every point $x \in M$ we have $$T_x W^s(x) \oplus T_x W^u(x) = T_x M.$$
\end{definition}

On surfaces, strong-transversality and quasi-transversality are equivalent, which ensures that the classes of quasi-Anosov and Anosov diffeomorpshisms are the same. On manifolds of dimension greater than 2, it is easy to see the difference between these two definitions, such as when there is a point where stable/unstable manifolds are curves with distinct tangent directions on a manifold of dimension 3. In this case, these curves satisfy the quasi-transversality condition at this point but do not satisfy the strong-transversality condition.

Thus, the class of quasi-Anosov diffeomorphisms is distinct from both classes of hyperbolic and Anosov diffeomorphisms. Indeed, John Franks and Clark Robinson \cite{FR} exhibited examples of a quasi-Anosov diffeomorphisms that are not Anosov (on manifolds with dimension greater than 2) and also there are examples of hyperbolic diffeomorphisms that are not quasi-Anosov such as the example of the 2-expansive diffeomorphism in \cite{APV} on the bitorus.

Being Anosov, quasi-Anosov, or hyperbolic are robust features of $C^1$ diffeomorphisms. Expansiveness, however, is not $C^1$-robust. Pseudo-Anosov diffeomorphisms on surfaces of genus greater than 1 have non-hyperbolic fixed points which can be perturbed to sinks/sources, contradicting expansiveness since expansive systems do not have stable/unstable points. Also, examples of topologically hyperbolic diffeomorphisms on the Torus, which satisfy both expansiveness and shadowing, but are not hyperbolic can be found in \cite{lewowicz2}. $C^1$-robust expansiveness only happens for quasi-Anosov diffeomorphisms, so on surfaces for Anosov diffeomorphisms. The $C^1$-generic surface diffeomorphisms can be classified as in the following result of Ricardo Mañé.

\begin{theorem}[\cite{manhe4} Corollary II]\label{corollary:manhe1}
If $M$ is a closed surface, then there exists a residual subset $\mathcal{O}\subset~\diff^1(M)$ such that every $f\in\mathcal{O}$ satisfies one of the following properties:
\begin{enumerate}
\item $f$ has infinitely many sinks,
\item $f$ has infinitely many sources or
\item $f$ satisfies Axiom A and the no-cycle condition.
\end{enumerate}
\end{theorem}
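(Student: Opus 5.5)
Since this is Mañé's Corollary II from \cite{manhe4}, the paper quotes it rather than proves it, but here is how I would reconstruct it. The approach is a semicontinuity argument that reduces everything to Mañé's uniform hyperbolicity theorem for star systems on surfaces. The key structural fact on a surface is that a non-hyperbolic periodic orbit, or a saddle that is too weak, can be turned into a sink or a source by a small $C^1$ perturbation of the derivative along the orbit (Franks' lemma). So a diffeomorphism that robustly has only finitely many sinks and sources should be robustly star.

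\emph{Step 1 (residual set).} For $f\in\diff^1(M)$ and $n\in\N$, let $s_n(f)$ (resp. $r_n(f)$) be the number of hyperbolic sinks (resp. sources) of period at most $n$. Hyperbolic sinks of a given period persist under perturbation, so each $s_n$ and $r_n$ is lower semicontinuous. Hence the total counts $s(f)=\sup_n s_n(f)$ and $r(f)=\sup_n r_n(f)$, with values in $\N\cup\{\infty\}$, are lower semicontinuous functions on $\diff^1(M)$. A lower semicontinuous function into a countable ordered set is continuous on a residual set. I take $\mathcal{O}$ to be the intersection of three sets:
\begin{itemize}
\item the residual set where both $s$ and $r$ are continuous;
\item the Kupka--Smale residual set;
\item the residual set given by the general density theorem, where $\Omega(f)=\overline{Per(f)}$ (Pugh's closing lemma).
\end{itemize}
If $f\in\mathcal{O}$ has infinitely many sinks or infinitely many sources, we are in case (1) or (2). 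Otherwise $s$ and $r$ are finite at $f$, hence constant on a neighborhood $\mathcal{U}$ of $f$.

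\emph{Step 2 (robust star property).} Every $g\in\mathcal{U}$ has all periodic orbits hyperbolic. Indeed, a non-hyperbolic periodic orbit of $g$ of period $\nu$ could be perturbed via Franks' lemma into a new sink or source, while the finitely many existing ones are preserved. This would contradict the local constancy of $s$ and $r$. Thus $f$ lies in the $C^1$ interior $\mathcal{F}^1(M)$ of the set of diffeomorphisms whose periodic orbits are all hyperbolic. By the characterization recalled before the definition of quasi-Anosov diffeomorphisms (\cite{hayashi}), $f$ is then Axiom A with no cycles, which is case (3).

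If one prefers to avoid invoking that characterization in full, Step 2 can be made quantitative. Applying Franks' lemma uniformly shows there are $m\in\N$ and $\lambda\in(0,1)$ with the following two properties for the saddles of every $g\in\mathcal{U}$ (outside the finitely many sinks and sources).
\begin{itemize}
\item The splitting $E^s\oplus E^u$ along the orbit is $m$-dominated. Otherwise a small rotation mixing the two directions would produce complex or equal eigenvalues, hence a non-hyperbolic orbit.
\item The contraction on $E^s$ and expansion on $E^u$ are uniform at the period, $\prod_{j}\|Dg^m|_{E^s(g^{jm}p)}\|\le\lambda^{\nu/m}$ and dually for $E^u$. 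Otherwise a small dilation of $E^s$ would create a non-hyperbolic orbit.
\end{itemize}
Passing to closures gives a dominated splitting on $\overline{Per(f)}\setminus\{\text{sinks, sources}\}$, which equals $\Omega(f)$ minus finitely many points.

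\emph{Main obstacle.} The hard step is promoting the dominated splitting together with uniform contraction \emph{at periodic orbits} to uniform hyperbolicity on all of $\Omega(f)$. Suppose $E^s$ failed to be uniformly contracted. Then there would be an ergodic measure on $\Omega(f)$ with a nonnegative Lyapunov exponent along $E^s$. I would apply Mañé's ergodic closing lemma to approximate this measure by periodic orbits of some $g\in\mathcal{U}$, whose stable exponents are then close to nonnegative. This contradicts the uniform contraction at the period. Doing this carefully requires continuity of the dominated splitting under perturbation, and uses that on a surface each bundle is one-dimensional, so exponents are Birkhoff averages of $\log\|Dg|_{E^s}\|$. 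Once $\Omega(f)$ is hyperbolic and equals $\overline{Per(f)}$, $f$ is Axiom A. The no-cycle condition then follows because a cycle could be perturbed into a non-hyperbolic periodic orbit, contradicting Step 2.
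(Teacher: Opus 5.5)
Your argument is correct and is essentially the standard derivation of Mañé's Corollary II, which the paper only cites: the sink and source counts are lower semicontinuous, hence generically locally constant; when both are finite, Franks' lemma forces $f\in\mathcal{F}^1(M)$; and the star theorem (Mañé on surfaces, or \cite{hayashi} as recalled in the paper) then gives Axiom A and no cycles. Your optional quantitative sketch of the star theorem is not needed and, as written, is only heuristic — for instance, the no-cycle step is asserted without argument — so the proof rests, legitimately, on the cited characterization of $\mathcal{F}^1(M)$.
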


Using this result we can prove Theorem \ref{theorem:1}, that is, diffeomorphisms in an open and dense set in $\diff^1(M)$, where $M$ is a closed surface, which are expansive are necessarily Anosov.
%For the moment, we continue to assume that $\dim(M) = 2$. 

\begin{proof}[Proof of Theorem \ref{theorem:1}]
Let $\mathcal{R}_1\subset\diff^1(M)$ be the set of diffeomorphisms satisfying either item (1) or (2) of Theorem \ref{corollary:manhe1}, that is, that have either infinitely many sinks or infinitely many sources, and recall that $\mathcal{F}^1(M)$, the set of star diffeomorphisms, equals the set of hyperbolic diffeomorphisms, which satisfy item (3). Thus, we have that the residual set obtained in the above theorem satisfies \mbox{$\mathcal{O}=\mathcal{R}_1\cup\mathcal{F}^1(M)$}. Consider the set $$\mathcal{C}=\{ f \in \diff^1(M) : f \text{ has at least one sink or one source} \}.$$ This is an open subset of $\diff^1(M)$ where no element is expansive since the existence of sinks or sources contradict expansiveness. Let $$\mathcal{B} = \mathcal{C}\cup \left( \mathcal{F}^1(M) \setminus \overline{\mathcal{A}}\right)$$ and note that $\mathcal{B}$ is an open subset of $\diff^1(M)$. We prove that $\mathcal{B}$ is dense in $\diff^1(M)\setminus\overline{\mathcal{A}}$. Indeed, each $f\in\diff^1(M)\setminus\overline{\mathcal{A}}$ is accumulated by diffeomoprhisms of $\mathcal{O}$ since $\mathcal{O}$ is a residual subset, and hence dense, in $\diff^1(M)$. Since $\mathcal{O}=\mathcal{R}_1\cup\mathcal{F}^1(M)$, it follows that $f$ is either accumulated by diffeomorphisms in $\mathcal{R}_1$ or by diffeomorphisms in $\mathcal{F}^1(M) \setminus \overline{\mathcal{A}}$. In the first case, $f$ is accumulated by diffeomorphisms in $\mathcal{C}$ since $\mathcal{R}_1\subset \mathcal{C}$. This proves the density of $\mathcal{B}$ in $\diff^1(M)\setminus\overline{\mathcal{A}}$. The fact that no $f\in\mathcal{B}$ is expansive follows from the fact that every $f \in \mathcal{B}$ satisfies $\Omega(f) \neq M$. Indeed, diffeomorphisms admitting sinks or sources are not transitive and hyperbolic diffeomorphisms that are not Anosov are also not transitive. Also, expansive surface diffeomorphisms are pseudo-Anosov, which are transitive (see \cite[Theorem 14.19]{farb}). This completes the proof.
%It is obvious that $\mathcal{C}$ and $\mathcal{F}^1\setminus\overline{\mathcal{A}}$  are open sets. It is also known that expansive and \( cw \)-expansive homeomorphisms on manifolds do not have stable points.  
%Therefore the set \( \mathcal{C}\), of diffeomorphism that has at least one sink or one source is open, and its elements are not \( cw \)-expansive.   
%\noindent $f\in\overline{\mathcal{R}}_1$ then $f\in\overline{\mathcal{C}}$. If  $f\in\overline{\mathcal{F}^1}\setminus\overline{\mathcal{C}}$ then $f\in\overline{\mathcal{F}^1}\setminus\overline{\mathcal{A}}.$
%\noindent Finally, statement \ref{item:exp_then_anosov3} follows from the fact that, by definition of $\mathcal{B}$, every $f \in \mathcal{B}$ satisfies $\Omega(f) \neq M$, for more detail on this later see \cite[Theorem 14.19]{farb}.
\end{proof}

\begin{corollary}\label{generic_exp_implies_anosov}
There is an open and dense subset $\mathcal{U}\subset\diff^1(M)$ such that if a diffeomorphism  in $\mathcal{U}$ is expansive, then it is Anosov.
\end{corollary}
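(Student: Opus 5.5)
The plan is to take $\mathcal{U}=\mathcal{A}\cup\mathcal{B}$, where $\mathcal{A}$ is the set of Anosov diffeomorphisms and $\mathcal{B}\subset\diff^1(M)\setminus\overline{\mathcal{A}}$ is the open set given by Theorem \ref{theorem:1}, open and dense in $\diff^1(M)\setminus\overline{\mathcal{A}}$, containing no expansive diffeomorphism. Two things must be checked: that $\mathcal{U}$ is open and dense in $\diff^1(M)$, and that expansive elements of $\mathcal{U}$ are Anosov.

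\textbf{Openness.} $\mathcal{A}$ is $C^1$-open by structural stability, or simply by robustness of hyperbolicity on the whole manifold. The set $\mathcal{B}$ is open in $\diff^1(M)\setminus\overline{\mathcal{A}}$. Since $\diff^1(M)\setminus\overline{\mathcal{A}}$ is itself open in $\diff^1(M)$, $\mathcal{B}$ is open in $\diff^1(M)$, and hence so is the union $\mathcal{U}$.

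\textbf{Density.} Let $f\in\diff^1(M)$ and let $\mathcal{V}$ be an open neighborhood of $f$. There are two cases.
\begin{enumerate}
\item If $\mathcal{V}\cap\mathcal{A}\neq\emptyset$, then $\mathcal{V}$ already meets $\mathcal{U}$.
\item Otherwise $\mathcal{V}\cap\overline{\mathcal{A}}=\emptyset$, because $\mathcal{V}$ is open and misses $\mathcal{A}$. Then $\mathcal{V}$ is a nonempty open subset of $\diff^1(M)\setminus\overline{\mathcal{A}}$, and density of $\mathcal{B}$ there gives $\mathcal{V}\cap\mathcal{B}\neq\emptyset$.
\end{enumerate}
In both cases $\mathcal{V}$ meets $\mathcal{U}$. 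Equivalently, $\overline{\mathcal{U}}\supset\overline{\mathcal{A}}\cup\overline{\diff^1(M)\setminus\overline{\mathcal{A}}}=\diff^1(M)$.

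\textbf{Conclusion.} If $g\in\mathcal{U}$ is expansive, then $g\notin\mathcal{B}$ by Theorem \ref{theorem:1}, so $g\in\mathcal{A}$.

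The only delicate point is the topological bookkeeping: $\mathcal{B}$ is dense only in the complement of $\overline{\mathcal{A}}$, not in the complement of $\mathcal{A}$. The boundary $\overline{\mathcal{A}}\setminus\mathcal{A}$ is handled by adjoining $\mathcal{A}$ itself, whose closure covers that boundary. Openness of $\mathcal{A}$ is the one external fact used, and it is standard.
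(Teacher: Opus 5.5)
Your proposal is correct and follows the paper's own proof. You take $\mathcal{U}=\mathcal{A}\cup\mathcal{B}$, get openness from the openness of $\mathcal{A}$ and $\mathcal{B}$, density from the density of $\mathcal{B}$ in $\diff^1(M)\setminus\overline{\mathcal{A}}$, and the conclusion from the fact that no element of $\mathcal{B}$ is expansive. You just spell out the topological bookkeeping that the paper leaves implicit.

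One small correction: of your two justifications for the openness of $\mathcal{A}$, use the robustness of the hyperbolic splitting. Structural stability by itself only gives a topological conjugacy to an Anosov map, and that alone does not make the perturbation Anosov.
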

\begin{proof}
Considering the open set $\mathcal{B}$ defined in Theorem~\ref{theorem:1} above, and noting that $\mathcal{B}$ is dense in $\diff^1(M) \setminus \overline{\mathcal{A}}$, we conclude that the set $\mathcal{U} = \mathcal{A} \cup \mathcal{B}$ is open and dense. Moreover, by the natural properties of these sets, every diffeomorphism is either non-expansive or Anosov.
\end{proof}

Thus, the set of expansive surface diffeomorphisms that are not Anosov is nowhere dense.
In Figure~\ref{fig:decomposition} we illustrate the open and dense subset 
$\mathcal{C}\cup\mathcal{F}^1(M)\subset\diff^1(M)$. 
As we will see in the following section, the set of cw-expansive diffeomorphisms contains a local residual subset of hyperbolic diffeomorphisms disjoint from the set of Anosov diffeomorphisms.

\begin{figure}[!htb]
\centering
\includegraphics[width=0.9\linewidth]{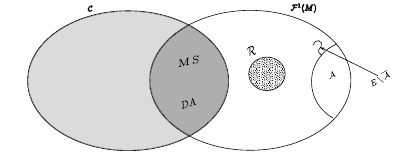}
\caption{In this picture $E$ denotes the set of expansive diffeomorphisms, $MS$ denotes the set of Morse-Smale diffeomorphisms, and $DA$ denotes the Derived-from-Anosov diffeomorphism. 
Also we show the local residual set $\SR$ of Theorem \ref{theorem:main_cw_exp1}.}
\label{fig:decomposition}
\end{figure}

\section{Generic cw-expansiveness}\label{section:residual}

We begin this section recalling the construction of the of the $2$-expansive diffeomorphism $f_0$ of the bitorus. Then we write the set of cw-expansive diffeomorphisms on a sufficiently small $C^1$-neighborhood of $f_0$ as a residual subset $\mathcal{R}=\bigcap_{n\in\N}F_n$. The last part is devoted to prove that indeed $F_n$ is open and dense in this neighborhood for every $n\in\N$. The difficult part of the proof of the density of $F_n$ is solved by the perturbation lemmas we perform at the end of this section.

%Here we construct on the bitorus a diffeomorphism  $f_0$ that is $2$-expansive, and by means of perturbation lemmas, we obtain a residual subset of cw-expansive diffeomorphisms that are not expansive. 
%The strategy is to consider a small neighborhood of the diffeomorphism $f_0$ such that, for any diffeomorphism in this neighborhood, the tangential intersections between its stable and unstable leaves remain contained in a perturbation domain near that of the  diffeomorphism $f_0$. To obtain a residual subset in this neighborhood, we take the intersection of a countable family of open and dense subsets. This family consists of diffeomorphisms whose  dynamical balls have components of arbitrarily small diameter, so that any diffeomorphism in the residual subset has all its dynamical balls totally disconnected. To achieve this, we use perturbation lemmas that break a dynamical ball into sufficiently small components.

%\subsection{Derived-from-Anosov diffeomorphism}

\subsection{Construction of DA on the Bitorus}
\label{secDefDA}
Consider \( M_1 \) and \( M_2 \) two copies of the torus \(\mathbb{T}^2 \) and \mbox{$ C^\infty$-diffeomorphisms} \( f_i: M_i \to M_i \), \( i = 1,2 \), such that:
\begin{enumerate}
    \item \( f_1 \) is derived-from-Anosov  (see for example \cite{robinson_livro} Section 7.8 for a construction of
such a map),
    \item \( f_2 \) is conjugate to \( f_1^{-1} \), and
    \item \( f_i \) has a fixed point \( p_i \), where \( p_1 \) is a source and \( p_2 \) is a sink.
\end{enumerate}
Also consider local charts \( \varphi_i: D \to M_i \), where $$ D = \{x \in \mathbb{R}^2 : \|x\| \leq 8\}, $$ such that:
\begin{enumerate}
    \item \( \varphi_i(0) = p_i \),
    \item  the pull back of the stable (unstable) foliation, $W^s(W^u)$, by \( \varphi_1 \) (\( \varphi_2 \)) is the horizontal (vertical) foliation on \( D \), and
    \item \( \varphi_1^{-1} \circ f_1 \circ \varphi_1(x) = \varphi_2^{-1} \circ f_2^{-1} \circ \varphi_2(x) = 2x \) for all \( x \in D \).
\end{enumerate}
Let \( A\subset D \) be the annulus $$A=\{x \in \mathbb{R}^2 : 1/8 \leq \|x\| \leq 8\}$$ and consider the inversion diffeomorphism \( \psi\colon A\to A \) given by \( \psi(x) = x/\|x\|^2 \). Denote by \( \hat{D}\subset D \) the open disk $$\hat{D}=\{x \in \mathbb{R}^2 : \|x\| < 1/8\}$$ and consider the equivalence relation $\sim$ on \( [M_1 \setminus \varphi_1(\hat{D})] \cup [M_2 \setminus \varphi_2(\hat{D})] \) generated by:
$$\varphi_1(x) \sim \varphi_2 \circ \psi(x) \,\,\,\,\,\, \text{for every} \,\,\,\,\,\, x \in A.$$
Consider the surface
\[
M = \frac{[M_1 \setminus \varphi_1(\hat{D})] \cup [M_2 \setminus \varphi_2(\hat{D})]}{\sim},
\]
which has genus two, and the quotient topology on \( M \).
Denote by $[x]$ the equivalence class of \( x \) and define the \( C^\infty \)~diffeomorphism \( f_0\colon M \to M \) by
\begin{equation}\label{c2_expanive}
f_0([x]) =
\begin{cases}
[f_1(x)] & \text{if } x \in M_1 \setminus \varphi_1(\hat{D}) \\
[f_2^{-1}(x)] & \text{if } x \in M_2 \setminus \varphi_2(\hat{D}).
\end{cases}
\end{equation}
Note that $f_0$ is well defined. Indeed, let
\begin{equation}\label{definition:anulus}
\widetilde{A}:=(M_1\setminus \varphi_1(\hat{D})/\sim)\cap(M_2\setminus\varphi_2(\hat{D})/\sim)=\varphi_1(A)\cup\varphi_2(A)/\sim
\end{equation}
and note that if $z\in\widetilde{A}$, $z$ is the equivalence class of $q\in A_2=\varphi_2(A)$, and $q'\in A_1=\varphi_1(A)$, then $\psi\circ f_2^{-1}(q)=f_1(\psi(q))=f_1(q')$, so $f_2^{-1}(q)\sim f_1(q')$.

By construction we know that \( f_0 \) is Axiom A and the non-wandering set consists of a hyperbolic repeller and a hyperbolic attractor. There are two one-dimensional foliations on $A$, the restrictions of $W^s$ on $M_1$ and $W^u$ on $M_2$. We will denote these also by $W^s$ and $W^u.$ The foliation $W^s$ consists of horizontal lines and the foliation $W^u$ is a more complicated ``dipole'' foliation as in Figure~\ref{fig:dipole}. 
\begin{figure}[!htb]
\centering
\includegraphics[width=0.8\linewidth]{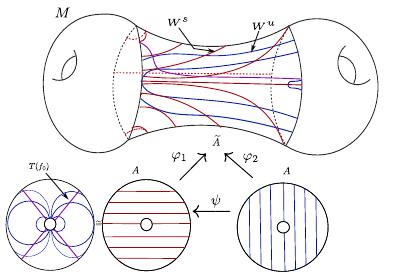}
\caption{\small Foliations $W^s$ and $W^u$ in the annulus $\widetilde{A}.$
}
\label{fig:dipole}
\end{figure}

\subsection{Cw-expansive diffeomorphisms}
We consider a $C^1$-neighborhood $\mathcal{U}_0$ of the 2-expansive diffeomorphism $f_0$ satisfying the following conditions:
\begin{itemize}
\item all diffeomorphisms $f\in\mathcal{U}_0$ have a non-wandering set formed by an attractor/reppeler pair which are hyperbolic,
\item each $f\in\mathcal{U}_0$ has a pair of one-dimensional stable/unstable foliations on the anulus $\widetilde A$,
\item there exists $c>0$ such that $W^s_c(x)$ and $W^u_c(x)$ are contained in the respective stable/unstable leaves of the foliations for every $x\in M$ and all diffeomorphisms $f\in\mathcal{U}_0$. 
\end{itemize}
We define the subset $\mathcal{R}\subset\mathcal{U}_0$ as $$\mathcal{R}=\bigcap_{n\in\mathbb{N}}F_n$$ where $F_n$ is defined as
\begin{equation}\label{definition_F_n}
F_n := \left\{ g \in \mathcal{U}_0 : \sup_{x \in M} \diam(\com_{x}\Gamma^g_c(x))<\frac{1}{n} \right\}.
\end{equation}
Here $\com_{x}A$ denotes the connected component of the set $A$ that contains $x$ and $\Gamma^g_c(x)$ denotes dynamical ball of $x$ with radius $c$ where the points are iterated by $g$.

%\begin{remark}Suppose $c>0$ is such that $W^s_c(x)\subset\Phi(x)$ where $\Phi$ is given by the Stable Manifold Theorem and similarly for $W^u_c(x)$, for all difeomorphism $f\in\mathcal{U}$. This cw-expansivity constant will be the same for every diffeomorphism $f$ in the residual subset $\mathcal{R}$.
%\end{remark}

\begin{proposition}\label{proposition:residual}
Every $f\in\mathcal{R}$ is cw-expansive.
\end{proposition}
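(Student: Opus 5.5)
We argue directly from the definition of cw-expansiveness and use the constant $c>0$ fixed in the choice of $\mathcal{U}_0$ as the cw-expansivity constant. Recall that $f$ is cw-expansive with constant $c$ if every continuum $C\subset M$ with $\diam f^k(C)\le c$ for all $k\in\Z$ is a single point. So take $f\in\mathcal{R}$ and such a continuum $C$. We will show $C$ is a singleton.

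\textbf{Step 1: $C$ lies inside a dynamical ball.} Fix any $x\in C$. For every $y\in C$ and every $k\in\Z$, both $f^k(x)$ and $f^k(y)$ lie in $f^k(C)$, which has diameter at most $c$. Hence $d(f^k(y),f^k(x))\le c$ for all $k\in\Z$. This means $y\in W^s_c(x)\cap W^u_c(x)=\Gamma^f_c(x)$, so $C\subset\Gamma^f_c(x)$.

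\textbf{Step 2: $C$ lies in the component of $x$.} Since $C$ is connected and contains $x$, it is contained in $\com_x\Gamma^f_c(x)$.

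\textbf{Step 3: $C$ has diameter zero.} Because $f\in F_n$ for every $n\in\N$, definition \eqref{definition_F_n} gives
$$\diam C\le\diam(\com_x\Gamma^f_c(x))<\tfrac1n \quad\text{for every } n.$$
Therefore $\diam C=0$, so $C=\{x\}$. This shows $f$ is cw-expansive with constant $c$.

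The only point needing care is the convention for cw-expansiveness. If the definition is phrased as "non-trivial continua satisfy $\diam f^k(C)>c$ for some $k$", Step 1 works unchanged, with the non-strict inequality $\le c$ matching the non-strict inequality in the definition of $\Gamma_c$. If a strict inequality is required, one can shrink the constant slightly. The argument is essentially immediate, and the real content of the residual-set construction lies in proving that each $F_n$ is open and dense, not in this proposition.
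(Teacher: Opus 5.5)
Your proof is correct and is essentially the paper's argument. The paper observes that membership in every $F_n$ forces $\diam(\com_x\Gamma^f_c(x))=0$ and then simply asserts that cw-expansiveness follows; your Steps 1--3 spell out that implication by placing any continuum $C$ whose iterates all have diameter at most $c$ inside $\com_x\Gamma^f_c(x)$ for a point $x\in C$. This uses exactly what the definition of $F_n$ provides, and it sidesteps the paper's intermediate and stronger claim that the whole ball $\Gamma^f_c(x)$ is totally disconnected, which would need a further justification.
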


\begin{proof}
If $f\in\mathcal{R}$, then $f\in F_n$ for every $n\in\mathbb{N}$. Thus,
$$\diam(\com_{x}\Gamma^f_c(x)) =0 \,\,\,\,\,\, \text{for every} \,\,\,\,\,\, x\in M.$$
This ensures that $\Gamma^f_c(x)$ is totally disconnected for every $x\in M$ from where cw-expansiveness follows.
\end{proof}

All that follows is devoted to prove that $F_n$ is an open subset of $\SU_0$ and dense in some $C^1$-neighborhood $\mathcal{U}\subset\mathcal{U}_0$ of $f_0$ for every $n\in\mathbb{N}$.

\begin{definition}[Inferior and superior limits]
Suppose that $\{X_n\}_{n\in\N}$ is a sequence of subsets of a space $M$. The set of all points $x$ in $M$ such that every open set containing $x$ intersects all but a finite number of the sets $X_n$ is called the \textit{limit inferior} of the sequence $\{X_n\}_{n\in\N}$ and is abbreviated ``$\liminf X_n$''. The set of all points $y$ in $M$ such that every open set containing $y$ intersects infinitely many sets $X_n$ is called the \textit{limit superior} of $\{X_n\}_{n\in\N}$ and is abbreviated ``$\limsup X_n$." If these two sets coincide (so that $\liminf X_n = L = \limsup X_n$), we say that $\{X_n\}_{n\in\N}$ is a \textit{convergent sequence} of sets and that $L$ is the \textit{limit} of $\{X_n\}_{n\in\N}$, which is abbreviated ``$L = \lim X_n$."
\end{definition}

\begin{theorem}[\cite{hocking} Theorem 2-101]\label{hocking_theorem_2_101} If $\{X_n\}_{n\in\N}$ is a sequence of connected sets in a compact Hausdorff space $S$, and if $\liminf X_n$ is not empty, then $\limsup X_n$ is connected.
\end{theorem}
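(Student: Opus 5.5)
The plan is to prove Theorem~\ref{hocking_theorem_2_101} by contradiction, using that $L^* := \limsup X_n$ is closed and therefore compact. Since a compact Hausdorff space is normal, a disconnection of $L^*$ can be fattened into disjoint open sets, and the connected sets $X_n$ will then be forced to sit inside one of these. The standing data are the hypothesis $\liminf X_n\neq\emptyset$ and the easy facts that $\limsup X_n$ is closed and $\liminf X_n\subset\limsup X_n$.

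\emph{Step 1.} I first check closedness. If $y\in\overline{L^*}$ and $U$ is an open set containing $y$, then $U$ contains some point of $L^*$. Hence $U$ meets infinitely many $X_n$, so $y\in L^*$.

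\emph{Step 2.} Suppose $L^*=P\cup Q$ with $P,Q$ nonempty, disjoint and closed in $L^*$, hence closed in $S$. By normality I choose disjoint open sets $U\supset P$ and $V\supset Q$. Fix a point $x\in\liminf X_n$. Then $x\in L^*$, so without loss of generality $x\in P\subset U$. By the definition of $\liminf$, $X_n\cap U\neq\emptyset$ for all $n\geq N$.

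\emph{Step 3.} Pick $q\in Q$. Since $q\in L^*$ and $V$ is a neighbourhood of $q$, $X_n\cap V\neq\emptyset$ for infinitely many $n$, in particular for infinitely many $n\geq N$. For each such $n$ the connected set $X_n$ meets both $U$ and $V$. Because $U$ and $V$ are disjoint open sets, $X_n$ cannot lie inside $U\cup V$, so there is a point $z_n\in X_n\setminus(U\cup V)$.

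\emph{Step 4.} The points $z_n$ lie in the compact set $K=S\setminus(U\cup V)$ and are indexed by infinitely many distinct $n$. Hence they have a cluster point $z\in K$: take $z$ such that every neighbourhood of $z$ contains $z_n$ for infinitely many indices $n$. Such a $z$ exists because otherwise every point of $K$ has a neighbourhood containing $z_n$ for only finitely many $n$, and a finite subcover would then leave only finitely many indices. Every neighbourhood of $z$ therefore meets infinitely many $X_n$, so $z\in L^*=P\cup Q\subset U\cup V$. This contradicts $z\in K$, so $L^*$ is connected.

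The main point needing care is Step 4. Since $S$ need not be metrizable, the cluster point must be produced through the open-cover characterization of compactness rather than by extracting a subsequence, and it must be attached to infinitely many distinct indices so that it lands in $\limsup X_n$.
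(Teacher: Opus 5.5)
Your proof is correct, including the care you take in Step 4 to get a cluster point from the open-cover form of compactness rather than from sequences. The paper gives no proof of its own and only cites Hocking--Young. Your argument is essentially the standard textbook proof of that theorem: closedness of $\limsup X_n$, normality to separate $P$ and $Q$ by disjoint open sets $U,V$, connectedness forcing points $z_n\in X_n\setminus(U\cup V)$, and compactness of $S\setminus(U\cup V)$ giving a point of $\limsup X_n$ outside $U\cup V$.
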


\noindent We have the following results for a sequence of  homeomorphisms $(g_n)_{n\in\N}$. It will  be applied for $ C^1$-diffeomorphisms, but we give a more general proof in the $C^0$ setting.

\begin{lemma}\label{lem_bola_dinamica}
If $g_n\to g$ in the $C^0$ topology and $y_n\to z$ in $M$, then $$\limsup\Gamma^{g_n}_c(y_n)\subset\Gamma^g_c(z).$$
\end{lemma}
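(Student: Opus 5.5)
Let $w\in\limsup\Gamma^{g_n}_c(y_n)$. By definition, every neighborhood of $w$ meets infinitely many of the sets $\Gamma^{g_n}_c(y_n)$. Using a countable basis of shrinking balls around $w$, we pick an increasing sequence $n_j\to\infty$ and points $w_j\in\Gamma^{g_{n_j}}_c(y_{n_j})$ with $w_j\to w$. By the definition of the dynamical ball (the intersection of the $c$-stable and $c$-unstable sets),
$$d\big(g_{n_j}^k(w_j),g_{n_j}^k(y_{n_j})\big)\le c \,\,\,\,\,\, \text{for every} \,\,\,\,\,\, k\in\Z \text{ and every } j.$$
The goal is to show $d(g^k(w),g^k(z))\le c$ for every $k\in\Z$. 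This gives $w\in W^s_c(z)\cap W^u_c(z)=\Gamma^g_c(z)$.

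The key step is to show that, for each fixed $k\in\Z$, $g_{n}^k(x_n)\to g^k(x)$ whenever $x_n\to x$ and $g_n\to g$ in the $C^0$ topology. For $k\ge 0$ we argue by induction. Write
$$d\big(g_n^{k+1}(x_n),g^{k+1}(x)\big)\le d\big(g_n(g_n^k(x_n)),g(g_n^k(x_n))\big)+d\big(g(g_n^k(x_n)),g(g^k(x))\big).$$
The first term is at most $\sup_{y}d(g_n(y),g(y))\to0$. The second term tends to zero by continuity of $g$ and the induction hypothesis. For $k<0$ we need $g_n^{-1}\to g^{-1}$ uniformly. This holds because the $C^0$ topology on homeomorphisms, as used here, controls both $g_n$ and $g_n^{-1}$. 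Even without that assumption, it follows from compactness of $M$ and uniform continuity of $g^{-1}$:
$$d\big(g_n^{-1}(y),g^{-1}(y)\big)=d\big(g^{-1}(g(g_n^{-1}(y))),g^{-1}(g_n(g_n^{-1}(y)))\big),$$
and $d\big(g(u),g_n(u)\big)\to0$ uniformly in $u$, so this quantity tends to zero uniformly in $y$. The same induction then applies to $g_n^{-1}$ and $g^{-1}$.

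With this in hand, we fix $k$ and pass to the limit $j\to\infty$ in the inequality above. Since $w_j\to w$ and $y_{n_j}\to z$, we get $g_{n_j}^k(w_j)\to g^k(w)$ and $g_{n_j}^k(y_{n_j})\to g^k(z)$. Continuity of $d$ and the fact that the closed condition $\le c$ passes to limits give $d(g^k(w),g^k(z))\le c$. Since $k$ was arbitrary, $w\in\Gamma^g_c(z)$.

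The only mildly delicate point is the convergence of negative iterates, which is handled by the uniform convergence of the inverses argued above. Everything else is a direct limit argument, and we need the non-strict inequality $\le c$ in the definition of $W^s_c$ and $W^u_c$ precisely so that this limit passage works.
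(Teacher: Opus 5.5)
Your proof is correct and follows the same route as the paper: you choose points $w_j\in\Gamma^{g_{n_j}}_c(y_{n_j})$ converging to $w$, show that $g_{n_j}^k(w_j)\to g^k(w)$ and $g_{n_j}^k(y_{n_j})\to g^k(z)$ for each fixed $k\in\Z$, and pass to the limit in the non-strict inequality $\le c$. You also make explicit two points the paper leaves implicit, namely the passage to a subsequence $n_j$ and the uniform convergence of negative iterates via $g_n^{-1}\to g^{-1}$, which tightens the argument.
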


\begin{proof}
Let $u\in\limsup\Gamma^{g_n}_c(y_n)$. Take $w_n\in\Gamma^{g_n}_c(y_n)$ such that $w_n\to u$.
%Given $r>0$, by definition, there is an infinite subset $N\subset\mathbb{N}$ and points $(w_n)_{n\in N}$ such that 
%$$w_n\in\Gamma^{g_n}_c(y_n)\cap B_{r}(u)\ \text{ for all } n\in N.$$
Since $g_n\to g$ and $w_n\to u$, for each $m\in\Z$ we can take the limit 
$$\dist(g^m_n(w_n),g^m(u))\leq\dist(g^m_n(w_n),g^m(w_n))+\dist(g^m(w_n),g^m(u))\to 0.$$
Also
%, since $y_n\to z$ we can take the limit
%$$\dist(g^m_n(y_n),g^m(z))\to 0.$$
%Now since 
$$\dist(g^m(u),g^m(z))\leq
\dist(g^m(u),g^m_n(w_n))+
\dist(g^m_n(w_n),g^m_n(y_n))+
\dist(g^m_n(y_n),g^m(z))$$ 
thus in the limit when $n\to\infty$ %and $\varepsilon\to 0,$ 
we have 
$$\dist(g^m(u),g^m(z))\leq c$$
and this concludes the proof.
\end{proof}

\begin{lemma}\label{open}
The set $F_n$ is an open subset of $\mathcal{U}_0$ for each $n\in\mathbb{N}$.
\end{lemma}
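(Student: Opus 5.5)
I will show that the complement $\mathcal{U}_0\setminus F_n$ is closed in $\mathcal{U}_0$. The argument uses Lemma \ref{lem_bola_dinamica} together with Theorem \ref{hocking_theorem_2_101}, applied to the connected components of the dynamical balls.

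\emph{Setting up the contradiction.} Suppose $g\in F_n$ but $F_n$ is not a neighborhood of $g$ in $\mathcal{U}_0$. Set
$$\delta:=\sup_{x\in M}\diam(\com_x\Gamma^g_c(x)).$$
We must use that this supremum is strictly less than $1/n$. Then there are $g_k\in\mathcal{U}_0\setminus F_n$ with $g_k\to g$ in the $C^1$, hence $C^0$, topology. Since each $g_k\notin F_n$, there are points $y_k\in M$ whose components
$$X_k:=\com_{y_k}\Gamma^{g_k}_c(y_k)$$
satisfy $\diam X_k\ge 1/n-1/k$. Each $X_k$ is a compact connected set containing $y_k$; it is compact because $\Gamma^{g_k}_c(y_k)$ is closed and components of closed sets are closed.

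\emph{Passing to a limit continuum.} By compactness of $M$, pass to a subsequence so that $y_k\to z$. Using compactness of the hyperspace of continua with the Hausdorff metric, refine further so that $X_k$ converges in the Hausdorff sense to a set $L$. Then $\liminf X_k=\limsup X_k=L$, and $z\in L$, so $\liminf X_k\neq\emptyset$. Theorem \ref{hocking_theorem_2_101} (valid along the subsequence) shows that $L$ is connected. Lemma \ref{lem_bola_dinamica} gives $L\subset\limsup\Gamma^{g_k}_c(y_k)\subset\Gamma^g_c(z)$. Since $L$ is connected and contains $z$, we get $L\subset\com_z\Gamma^g_c(z)$.

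\emph{Diameter estimate and the main obstacle.} Diameter is continuous under Hausdorff convergence, so
$$\diam L=\lim\diam X_k\ge 1/n.$$
Hence $\diam\com_z\Gamma^g_c(z)\ge 1/n>\delta$, contradicting the definition of $\delta$. This step is also where the hypothesis that the supremum is strictly below $1/n$ enters; with only a pointwise bound the argument would not close. The care needed is in two places:
\begin{itemize}
\item ensuring the limit set inherits the diameter lower bound, which is why I pass to Hausdorff-convergent subsequences rather than working with $\limsup$ alone (for $\limsup$ alone only $\diam\ge\limsup\diam X_k$ would be guaranteed);
\item checking that the subsequence still satisfies the hypotheses of Lemma \ref{lem_bola_dinamica}, which it does.
\end{itemize}
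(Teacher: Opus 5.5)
Your proof is correct and is essentially the paper's argument: you show the complement is closed using Hausdorff limits of components, Lemma~\ref{lem_bola_dinamica} and Theorem~\ref{hocking_theorem_2_101}, and you streamline it by choosing $y_k$ directly with $\diam X_k\ge 1/n-1/k$, collapsing the paper's two-stage limit $L_n\to L$ into one while making explicit the fact $z\in L$ that the paper uses tacitly. One small correction to your closing remark: a pointwise bound $\diam\com_z\Gamma^g_c(z)<1/n$ would already give the contradiction, because it occurs at the single point $z$, so the strictness of the supremum is not what makes the argument close.
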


\begin{proof}
We prove that the complement of $F_n$ is closed in $\mathcal{U}_0$ for every $n\in\N$. Assume that $(g_n)_{n\in\N}\subset \mathcal{U}_0\setminus F_{k_0}$ for some \mbox{$k_0\in\mathbb{N}$} and that \mbox{$g_n\to g\in\mathcal{U}_0$} when $n\to\infty$. We will prove that $g\notin F_{k_0}$.
Fix $n\in\N$.
From $g_n\notin F_{k_0}$ we obtain that 
$$\sup_{x \in M}\diam(\com_{x}\Gamma^{g_{n}}_c(x))\geq\frac{1}{k_0}.$$ 
Thus, we can consider a sequence $(x_l)_{l\in\N}\subset M$ such that 
$$\lim_{l\to\infty}\diam(\com_{x_l}\Gamma^{g_{n}}_c(x_l))\geq\frac{1}{k_0}.$$ 
As $M$ is compact, we can assume that 
$(x_l)_{l\in\N}$ converges to $y_n$ (notice that $(x_l)_{l\in\N}$ depends on $n$).
Also, we can assume that $$\lim_{l\to\infty}\com_{x_{l}}\Gamma^{g_{n}}_c(x_{l})=L_n$$ 
for some continuum $L_n\subset M$.
By the continuity of the diameter function we have 
$\diam(L_n)\geq\frac{1}{k_0}$.
%Thus, we can consider a subsequence $(g_{n_k})_{k\in\N}$ of %$(g_n)_{n\in\N}$ and a sequence of points $(x_{n_k})_{k\in\N}\subset M$ converging to $x\in M$ such that 
%$$\lim_{k\to\infty}\com_{x_{n_k}}\Gamma^{g_{n_k}}_c(x_{n_k})=L$$ 
%is a continuum containing $x$ with $\diam(L)\geq\frac{1}{k_0}$ (this limit is taken in the Hausdorff topology).
Since \mbox{\( L_n = \lim_{l\to\infty} \com_{x_{l}}\Gamma^{g_n}_c(x_{l}) \)}
and
$\com_{x_l}\Gamma^{g_n}_c(x_l)\subset\Gamma^{g_n}_c(x_l)$
we have
$$L_n\subset \Gamma^{g_n}_c(y_n).$$
Again, as $M$ is compact, we can assume that $y_n\to z\in M$.
By Lemma~\ref{lem_bola_dinamica}  we obtain
$$  L=\lim_{n\to\infty} L_n\subset \limsup_{n\to\infty} \Gamma^{g_n}_c(y_n)\subset\Gamma^g_c(z).$$
By Theorem~\ref{hocking_theorem_2_101} we have that $L$ is connected.
As $\diam(L)\geq 1/k_0$ we conclude that
%hence since $x\in\liminf\com_{x}\Gamma^{g_n}_c(x)\neq\emptyset$, it is because  ensures that the $\limsup$ of a sequence of connected sets is connected, provided that the $\liminf$ is nonempty, we have
%$$  L\subset \limsup %\com_{x}\Gamma^{g_n}_c(x)\subset\com_{x}\Gamma^g_c(x),$$
%Therefore
%$$ \frac{1}{k_0} \leq \sup_{x \in M} \diam(L) \leq \sup_{x \in M} %\diam(\limsup \com_{x}\Gamma^{g_n}_c(x)) \leq \sup_{x \in M} %\diam(\com_{x}\Gamma^g_c(x)), $$
%this is 
$g\notin F_{k_0}$, which completes the proof.
\end{proof}

%\begin{remark}
%We recall that  
%\(\mathcal{R} = \bigcap_{n \geq 1} F_n.\)  If we prove that $\mathcal{R}$ is dense in $\mathcal{U}$ (with a uniform cw-expansive constant $c$), it follows that each $F_n$ is dense in $\mathcal{U}$. Therefore, by Proposition~\ref{proposition:residual}, the subset of cw-expansive $C^1$-diffeomorphisms  contained in \(\mathcal{U} \subset F_n\), which lies inside $F_n$, is dense in $\mathcal{U}$.
%This provides one way to prove that cw-expansive diffeomorphisms are dense in $\mathcal{U}$.
%\end{remark}

%Throughout this work, however, we present a different approach: first, we will prove that each $F_n$ is dense in some neighborhood $\mathcal{U}$, and then we construct a residual set of cw-expansive diffeomorphisms. This form of proof is much more advantageous, as it is consistent with the established hierarchy.

%Theorem \ref{theorem:F_n_dense} proves the density of $F_n$ .

We denote by 
\[
\tpitchfork(f) = W^u(\Omega(f), f) \pitchfork W^s(\Omega(f), f)
\]  
the set of points of transverse intersection between the stable and unstable manifolds in $M$.  

Let $\mathcal{W}^1 \subset \mathcal{F}^1(M)$ denote an open neighborhood  of $f_0$ given by the $\Omega$-stability of $f_0$, such that for each $f$ close to $f_0$ we have $\Omega(f)\cap\widetilde{A}=\emptyset$ ($\widetilde{A}$ as defined in Equation \eqref{definition:anulus}) and
 
\[
\Omega(f) = \Delta_1(f) \cup \Delta_2(f),
\]
 where $\Delta_1(f) \gg \Delta_2(f)$ are basic sets (recall $\gg$ from Definition \ref{cycle_condition}).  

We now show that the set-valued map that associates to each $f$ the set of points in $\widetilde{A}$ which are not transverse intersections of stable and unstable manifolds is upper semicontinuous. 
%in the sense of Definition~\ref{definition:lower_upper_semicontinuous}.

\begin{lemma}\label{lemma:T(f)upper_semicontinuous}\label{subsection:transversality}
For all $f\in\mathcal{W}^1$ the set $T(f) := \widetilde{A} \setminus \tpitchfork(f)$ is closed and the map
\[
T \colon \mathcal{W}^1 \to \mathcal{H}(\widetilde{A}), 
\]
is upper semicontinuous.
\end{lemma}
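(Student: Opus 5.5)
Both claims reduce to one fact: transverse intersections of the stable and unstable laminations persist under small $C^1$ perturbations of $f$ and small displacements of the point. Closedness of $T(f)$ is the case with no perturbation of $f$; upper semicontinuity is the uniform version over $f$. I will use the structure coming from $\mathcal{W}^1$. For every $f\in\mathcal{W}^1$, $\Omega(f)=\Delta_1(f)\cup\Delta_2(f)$ with $\Omega(f)\cap\widetilde A=\emptyset$. Every point $z\in\widetilde A$ is wandering, so it lies in $W^u(\Delta_1(f))\cap W^s(\Delta_2(f))$. Through $z$ pass the stable and unstable leaves $W^s(z,f)$ and $W^u(z,f)$, which are the leaves of the two one-dimensional foliations on $\widetilde A$ given by the choice of $\mathcal U_0$. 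These foliations have $C^1$ leaves whose tangent lines $E^s_f(z)$ and $E^u_f(z)$ depend continuously on the pair $(z,f)$. To see this, write $z=f^{-k}(z')$ and $z=f^{k}(z'')$ with $z'$ in a fixed small neighbourhood of $\Delta_2(f)$ and $z''$ in one of $\Delta_1(f)$, for a uniform $k$, using compactness of $\widetilde A$. Local stable and unstable manifolds of a hyperbolic set vary continuously in the $C^1$ topology with the point and with $f$ (stable manifold theorem and $\Omega$-stability). Pulling back or pushing forward by the continuous maps $f^{\mp k}$, which also depend continuously on $f$ in $C^1$, gives continuity of the leaves near $z$.

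\textbf{Step 1 ($T(f)$ is closed).} I show that $\tpitchfork(f)\cap\widetilde A$ is relatively open in $\widetilde A$. If $z\in\widetilde A$ is a transverse intersection point, then $E^s_f(z)\neq E^u_f(z)$. By the continuity of $z\mapsto (E^s_f(z),E^u_f(z))$, the inequality persists on a neighbourhood of $z$. Every point of that neighbourhood is still an intersection point (as noted above, all points of $\widetilde A$ are), and the intersection there is transverse. Hence $T(f)=\widetilde A\setminus\tpitchfork(f)$ is closed, so compact, and $T(f)\in\mathcal H(\widetilde A)$.

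\textbf{Step 2 (upper semicontinuity).} I argue by contradiction with sequences. Suppose $U$ is an open neighbourhood of $T(f)$, $f_n\to f$ in $\mathcal W^1$, and $z_n\in T(f_n)\setminus U$. By compactness of $\widetilde A$, I pass to a subsequence with $z_n\to z\in\widetilde A\setminus U$; in particular $z\notin T(f)$, so $z\in\tpitchfork(f)$ and $E^s_f(z)\neq E^u_f(z)$. Joint continuity of $(z,f)\mapsto(E^s_f(z),E^u_f(z))$ then gives $E^s_{f_n}(z_n)\neq E^u_{f_n}(z_n)$ for large $n$. So $z_n\in\tpitchfork(f_n)$, contradicting $z_n\in T(f_n)$.

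\textbf{Main obstacle.} The delicate step is justifying the joint continuity of the tangent directions in $(z,f)$ on all of $\widetilde A$, in the $C^1$ sense and not merely $C^0$. One point needing care is $\partial\widetilde A$, the boundary circles where the annulus meets the rest of the surface; I plan to handle it by working on a slightly larger annulus whose closure still misses $\Omega(f)$, which $\Omega$-stability allows. I will rely on the $C^1$ continuity of local invariant manifolds for hyperbolic sets under $C^1$ perturbations, together with the uniform number $k$ of iterates needed to bring $\widetilde A$ into the local product neighbourhoods of $\Delta_1$ and $\Delta_2$. Shrinking $\mathcal W^1$ if necessary ensures that $k$ can be chosen uniform in $f$.
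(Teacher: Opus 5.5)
Your proposal is correct and takes essentially the same route as the paper: both rest on the $C^1$-continuous dependence of local stable/unstable manifolds on the point and on $f$, transported into $\widetilde{A}$ by a fixed number of iterates, so that transversality is an open condition jointly in $(z,f)$. The only difference is cosmetic: you conclude upper semicontinuity by sequential compactness, whereas the paper covers a compact $K\subset\widetilde{A}\setminus T(f)$ by finitely many neighbourhoods on which transversality persists for all $g$ near $f$.
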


\begin{proof}
Let $f \in \mathcal{W}^1$ and $K \subset \widetilde{A}$ be a compact set such that 
$T(f) \cap K = \emptyset$. Assume that $\Delta_1(f) \gg \Delta_2(f)$.  
By the $\Omega$-stability we have that for every neighborhood $U'$ of $\Omega(f)$ there exists a neighborhood 
$\mathcal{U}'$ of $f$ in $\mathcal{W}^1$ such that, for all $g \in \mathcal{U}'$,
\[
\Delta_1(g) \cup \Delta_2(g) = \Omega(g) \subset U'.
\]
Since for all $k\geq0$
we have
\begin{eqnarray*}
f^{-k}\big(W^s_\varepsilon(\Delta_1(f),f)\big) &\subset& f^{-k-1}\big(W^s_\varepsilon(\Delta_1(f),f)\big) \,\,\,\,\,\, \text{and}\\
f^{k}\big(W^u_\varepsilon(\Delta_2(f),f)\big) &\subset& f^{k+1}\big(W^u_\varepsilon(\Delta_2(f),f)\big),
\end{eqnarray*} 
we can choose $n > 0$ sufficiently large so that  
\[
f^{-n}\big(W^s_\varepsilon(\Delta_1(f),f)\big) \quad \text{and} \quad  
f^{n}\big(W^u_\varepsilon(\Delta_2(f),f)\big)
\]
cover $\widetilde{A}$. 
Thus the  set $\tpitchfork(f)\cap\widetilde{A}$ is open and its complement $T(f)$ is closed.

Take $y\in K$ and take $U$ a closed neighborhood of $y$ contained in $\tpitchfork(f)\cap\widetilde{A}$. 
By the continuity of stable and unstable manifolds with respect to $f$ in the $C^1$-topology, there exists a neighborhood $\mathcal{U''}\subset\mathcal{U}'$ of $f$ such that
\[
U \subset g^{-n}\big(W^s_\varepsilon(\Delta_1(g),g)\big) \pitchfork g^{n}\big(W^u_\varepsilon(\Delta_2(g),g)\big),
\]
for all $g\in\mathcal{U}''$. Hence, for all $y'\in K$ there exist $\mathcal{U}'_{y'}$ neighborhood of $f$ and $U_{y'}$ neighborhood of  $y'$ such that $$U_{y'}\subset\tpitchfork(g)\cap\widetilde{A}$$ for all $g\in\mathcal{U}_{y'}\subset\mathcal{U}'$. By the compactness of $K$ we have that there exists a finite cover such that $K\subset\cup_{j=1}^{m} U_{y'_j}$, and considering $\mathcal{U}=\cap_{j=1}^{m}\mathcal{U}_{y'_j}\subset\mathcal{U}'$ we have $$K\subset\tpitchfork(g)\cap\widetilde{A}$$ for all $g\in\mathcal{U}.$ This proves the semicontinuity of the map $T$.
\end{proof}

\subsection{Perturbation Lemmas}
In what follows, we explain that it is possible to construct a perturbation within a foliated rectangle in the plane such that the resulting leaves are $C^1$-close to the original ones, but $C^2$-well-separated, as in the Figure~\ref{fig:h}.
Let $\varepsilon\in(0,1)$, $(\theta_0,\rho_0)\in\mathbb{R}^2$, $m\in\N$, $\delta_0\in(0,1)$, $\delta_1>0$, consider the rectangle
$$R_*=[\theta_0,\theta_0+\delta_0/m]\times[\rho_0,\rho_0+\delta_1],$$ and define the functions $\mu_{\theta_0},\mu_{\rho_0}\colon\mathbb{R}\to\mathbb{R}$ by
\begin{align*}
    \mu_{\theta_0}(\theta) &= \frac{\delta}{m} \cdot \varphi\left( \frac{m(\theta - \theta_0)}{\delta_0} \right), \\
    \mu_{\rho_0}(\rho) &= \varphi\left( \frac{\rho - {\rho_0}}{\delta_1} \right),
\end{align*}
where $$\delta=\frac{\min\{\delta_0,\delta_1\}\varepsilon/2}{\max^2_{t\in\mathbb{R}}\{ |\varphi(t)|,|\varphi'(t)|\}}$$
and $\varphi\colon\mathbb{R}\to[0,1]$ is a smooth bump function  that is zero for values less than zero and for values greater than 1, as in the Figure~\ref{fig:varphi}.

\begin{figure}[!htb]
\centering
\includegraphics[width=1\linewidth]{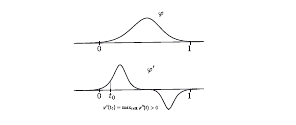}
\caption{\small Bump function $\varphi$ and its derivative.}
\label{fig:varphi}
\end{figure}

\begin{remark}
Under the above conditions, if $(\theta,\rho)\in\!R_*$, then
%computing the derivatives of $\varphi$ and taking the absolute value, we have 

\begin{eqnarray*}
&\Big|\frac{\delta}{m}\varphi\Big(\frac{m(\theta-\theta_0)}{\delta_0}\Big)\varphi\Big(\frac{\rho-{\rho_0}}{\delta_1}\Big)\Big|\leq\varepsilon/2, &\\
&
\Big|\frac{\delta}{\delta_0}\varphi'\Big(\frac{m(\theta-\theta_0)}{\delta_0}\Big)\varphi\Big(\frac{\rho-{\rho_0}}{\delta_1}\Big)\Big|\leq\varepsilon/2, &\\
&\Big|\frac{\delta}{m\delta_1}\varphi\Big(\frac{m(\theta-\theta_0)}{\delta_0}\Big)\varphi'\Big(\frac{\rho-{\rho_0}}{\delta_1}\Big)\Big|\leq\varepsilon/2. &
\end{eqnarray*}
Hence
\begin{equation}\label{inequality_epsilon}
\left.\begin{array}{ccc}
|\mu_{\theta_0}(\theta)\mu_{\rho_0}(\rho)|\leq\varepsilon/2, &
|\mu'_{\theta_0}(\theta)\mu_{\rho_0}(\rho)|\leq\varepsilon/2,&
|\mu_{\theta_0}(\theta)\mu_{\rho_0}'(\rho)|\leq\varepsilon/2,
\end{array}\right.
\end{equation}
for all $(\theta,\rho)\in R_*$.
\end{remark}

We define the map  $h\colon\mathbb{R}^2\to\mathbb{R}^2$ by
\begin{equation}\label{difeomorphism_h}
h(\theta, \rho) = 
\begin{cases}
\big(\theta, \rho\big) & \text{if } (\theta, \rho) \notin R_*, \\
\big(\theta, \rho + \mu_{\theta_0}(\theta) \mu_{\rho_0}(\rho)\big) & \text{if } (\theta, \rho) \in R_*.
\end{cases}
\end{equation}

\begin{figure}[!htb]
\centering
\includegraphics[width=1\linewidth]{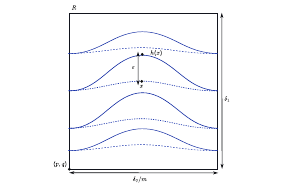}
\caption{\small Diffeomorphism $h$.}
\label{fig:h}
\end{figure}
We will show that $h$ is a diffeomorphism under certain conditions, for instance if $|\mu_{\theta_0}\mu'_{\rho_0}|<1$. Surjectivity follows from the following lemma.

\begin{lemma}\label{lemma:h_sobre}
Let $D\subset\mathbb{R}^2$ be a disk and let $h\colon D\to\mathbb{R}^2$  be a  continuous map that satisfies $h|_{\partial D}=Id$. Then $D\subset h(D)$.
\end{lemma}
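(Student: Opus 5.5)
We argue by contradiction using degree theory, specifically the fact that a disk does not retract onto its boundary. The idea is to show that if some interior point of $D$ were missed by $h$, we could compose $h$ with a radial projection to get such a retraction.

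Suppose there is a point $p\in D\setminus h(D)$. Since $h|_{\partial D}=Id$, we have $\partial D\subset h(D)$. Hence $p$ must lie in the interior of $D$.

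Let $c$ be the center and $r$ the radius of $D$. Define the radial projection $\pi\colon\mathbb{R}^2\setminus\{p\}\to\partial D$ that sends $z$ to the unique point where the ray from $p$ through $z$ meets $\partial D$. Because $p$ is an interior point and $D$ is convex, each such ray exits $D$ exactly once. The projection $\pi$ is continuous, which follows by solving the quadratic equation $\|p+t(z-p)-c\|=r$ for its unique positive root $t$. It also fixes every point of $\partial D$.

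Now consider $r=\pi\circ h\colon D\to\partial D$. This map is well defined because $p\notin h(D)$, and it is continuous. On $\partial D$ we have $r(z)=\pi(z)=z$. So $r$ is a retraction of $D$ onto $\partial D$.

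A disk admits no retraction onto its boundary circle. This is the no-retraction theorem, equivalent to Brouwer's fixed point theorem. One way to see it: the retraction would factor the identity of $S^1$ through the contractible space $D$, forcing $H_1(S^1)\cong\mathbb{Z}$ to be zero. This contradiction shows that $D\setminus h(D)$ is empty, so $D\subset h(D)$.

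The only step needing care is the continuity of $\pi$ and that it restricts to the identity on $\partial D$. Both follow from the strict convexity of the round disk, and the argument works equally for any convex disk with $p$ in its interior.
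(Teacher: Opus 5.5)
Your proof is correct and is essentially the paper's argument in a different form. The paper gets its contradiction from the null-homotopy $\gamma_t(x)=h(tx)$ of $\partial D$ inside $\mathbb{R}^2\setminus\{p\}$, around which $\partial D$ winds once, while you compose $h$ with the radial projection from $p$ to obtain a retraction $D\to\partial D$, which is the same obstruction; your explicit observation that $p$ cannot lie on $\partial D$ (since $\partial D\subset h(D)$) usefully spells out a step the paper's winding-number claim uses only implicitly.
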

\begin{proof}
Suppose, for contradiction, that there exists $p\in D$ such that 
$p\notin \image(h)$. Then the image of $h$ is contained in $\mathbb{R}^2\setminus\{p\}$.
Consider the boundary $\partial D\simeq \mathbb{S}^1$. 
Since $h|_{\partial D}=\mathrm{Id}$, we have $h(\partial D)=\partial D$. 
The inclusion of $\partial D$ into $\mathbb{R}^2\setminus\{p\}$ would be 
null-homotopic in $\mathbb{R}^2\setminus\{p\}$: there would exist a homotopy $\gamma_t$ from the constant 
loop $\gamma_0$ to $\gamma_1=\partial D$ inside $h(D)\subset\mathbb{R}^2\setminus\{p\}$, namely $\gamma_t(x)=h(tx)$.
But this is impossible because the loop $\partial D$ winds around the point $p$, 
so in $\mathbb{R}^2\setminus\{p\}$ it is not null-homotopic ($\mathbb{R}^2\setminus\{p\}$ is not simply connected). 
This contradiction shows that such a point $p$ cannot exist, and hence 
$D\subset h(D)$.
\end{proof}

To prove injectivity of $h$, suppose that $\rho\neq\widehat\rho$ and 
\[
\rho+\mu_{\theta_0}(\theta)\mu_{\rho_0}(\rho)=\widehat\rho+\mu_{\theta_0}(\theta)\mu_{\rho_0}(\widehat\rho).
\]  
Then, by the Mean Value Theorem, there exists $\tilde\rho \in [{\rho_0},{\rho_0}+\delta_1]$ such that  
\[
-\frac{1}{\mu_{\theta_0}(\theta)}=\frac{\mu_{\rho_0}(\widehat\rho)-\mu_{\rho_0}(\rho)}{\widehat\rho-\rho}=\mu'_{\rho_0}(\tilde\rho),
\]  
that contradicts the condition $|\mu_{\theta_0}\mu'_{\rho_0}|<1$. Hence, $h$ is bijective.

\noindent Now to establish that $h$ is a diffeomorphism, we compute its derivative:  
\begin{equation}\label{derivative_of_h}
Dh=\begin{pmatrix}
1 & 0 \\
\mu'_{\theta_0}(\theta)\mu_{\rho_0}(\rho) & 1+\mu_{\theta_0}(\theta)\mu'_{\rho_0}(\rho)
\end{pmatrix}
\end{equation}  
and note that the Jacobian determinant is $1+\mu_{\theta_0}(\theta)\mu'_{\rho_0}(\rho)\neq 0$. This shows that $h$ is a local diffeomorphism and injectivity ensures that $h$ is a diffeomorphism.

Next, we construct an important tool to locally perturb the stable and unstable manifolds inside a rectangle of base $\delta_0 > 0$ and height $\delta_1 > 0$, namely
\begin{equation}\label{rec}
[{\theta_0}, {\theta_0}+\delta_0] \times [{\rho_0}, {\rho_0}+\delta_1],
\end{equation}
with $({\theta_0},{\rho_0})\in\mathbb{R}^2$. This perturbation consists of the composition of several local perturbations supported on subrectangles into which $[{\theta_0}, {\theta_0}+\delta_0] \times [{\rho_0}, {\rho_0}+\delta_1]$ is divided, as in Figure~\ref{fig:curvature}. More precisely, for $m\in\N$ and for each $j = 1, \dots, m$, we define the subrectangles
\begin{equation}\label{spliting_rectangles}
R_j = [{\theta}_j, \, {\theta}_j + \delta_0/m] \times [{\rho_0}, \, {\rho_0}+\delta_1],
\end{equation}
where
$${\theta}_j={\theta_0}+(j-1)\delta_0/m,\  j\geq 1.$$

\begin{figure}[!htb]
\centering
\includegraphics[width=1\linewidth]{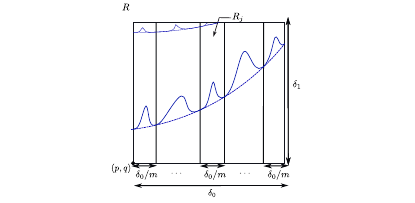}
\caption{\small Adding arbitrary curvature in the rectangle $R$.}
\label{fig:curvature}
\end{figure}

\begin{lemma}\label{lemma:curvatura}
For each $\varepsilon>0$, $K>0$, and $L>0$,
there is $m\in\N$ such that for each $j\in\{1,\dots,m\}$ there is $h_j\colon\mathbb{R}^2\to\mathbb{R}^2$ with $h_j-Id$ supported in $R_j$
%\emph{(\ref{spliting_rectangles})} 
satisfying: 
\begin{enumerate}
\item\label{lemma:item_i} 
if $\rho\colon[{\theta_0},{\theta_0}+\delta_0]\to[{\rho_0},{\rho_0}+\delta_1]$ of class $C^2$ satisfies $|\rho''|,|\rho'|<L$, 
then 
there is $t_{j}\in[{\theta}_j,{\theta}_j+\delta_0/m]$ such that
\begin{equation*}
\left\langle
%\frac{d^2}{dt^2}
\gamma''(t_{j}),N(t_{j})\right\rangle >K,
\end{equation*}
where $\gamma(\theta)=h_j(\theta,\rho(\theta))$,
%$\lambda=\grph(\rho)$,  
$N(t_{j})=(b,-a)$ and 
$\gamma'(t_{j})=(a,b)$;
\item\label{lemma:item_ii} $h_j$ is $\varepsilon$-close  to $Id$ in the $C^1$ topology.
\end{enumerate}
\end{lemma}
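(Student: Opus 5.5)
We take $h_j$ to be the map $h$ of \eqref{difeomorphism_h} built on the rectangle $R_*=R_j$, with $\theta_0$ replaced by $\theta_j$. The vertical displacement is $u(\theta,\rho)=\mu_{\theta_j}(\theta)\mu_{\rho_0}(\rho)$. We will fix $m$ large at the end, depending on $\varepsilon,K,L$, and on $\delta_0,\delta_1$ and $\varphi$.

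\textbf{Step 1 (item (\ref{lemma:item_ii})).} The $C^0$ and $C^1$ sizes of $h_j-Id$ are controlled by \eqref{inequality_epsilon}: $|u|,|\partial_\theta u|,|\partial_\rho u|\le\varepsilon/2$ on $R_j$. Since $\delta$ carries the factor $\varepsilon/2$ divided by $\max^2\{|\varphi|,|\varphi'|\}$, we have $|\mu_{\theta_j}\mu'_{\rho_0}|<1$ once $\varepsilon<1$. The discussion preceding the lemma (Lemma \ref{lemma:h_sobre}, the injectivity argument, and \eqref{derivative_of_h}) then shows that $h_j$ is a diffeomorphism that is $\varepsilon$-close to $Id$ in $C^1$. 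This holds uniformly in $m$, because $\partial_\theta u=(\delta/\delta_0)\varphi'(\cdot)\varphi(\cdot)$ does not depend on $m$.

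\textbf{Step 2 (item (\ref{lemma:item_i})).} We write $\gamma(\theta)=(\theta,\sigma(\theta))$ with $\sigma(\theta)=\rho(\theta)+u(\theta,\rho(\theta))$. For a graph, $\langle\gamma'',N\rangle$ equals $\pm\sigma''$ (here $\gamma''=(0,\sigma'')$ and $N=(\sigma',-1)$, so the pairing is $-\sigma''$). It therefore suffices to find $t_j$ with $-\sigma''(t_j)>K$, or to replace $\varphi$ by $-\varphi$ so that the sign works out. Expanding,
$$\sigma''=\rho''+u_{\theta\theta}+2u_{\theta\rho}\rho'+u_{\rho\rho}\rho'^2+u_\rho\rho''.$$
The term $u_{\theta\theta}=(m\delta/\delta_0^2)\varphi''(m(\theta-\theta_j)/\delta_0)\,\varphi((\rho-\rho_0)/\delta_1)$ is of order $m$. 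The other terms are $O(1)$ in $m$: they involve at most one $\theta$-derivative, and each such derivative gives a factor $m\cdot(\delta/m)$. Their bound is uniform given $|\rho'|,|\rho''|<L$.

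\textbf{Step 3 (main obstacle).} We need $|\varphi''(s)|$ bounded below at some $s$ where also $\varphi((\rho(\theta)-\rho_0)/\delta_1)$ is bounded below, with the correct sign. The difficulty is that the curve $\rho$ might pass through the region where $\mu_{\rho_0}$ is small, near the top or bottom of the rectangle.

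To handle this, first choose $\varphi$ equal to $1$ on a large middle interval $[\eta,1-\eta]$ (compare Figure~\ref{fig:varphi}). Then on a subinterval of $[\eta,1-\eta]$ there is nothing to gain from $\varphi''$, so instead we use a point $s_*$ in the rising part where $\varphi''(s_*)$ has the desired sign and magnitude $c_0>0$.

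Next we handle the $\rho$-direction. For $m$ large, the graph of $\rho$ over $[\theta_j,\theta_j+\delta_0/m]$ is nearly constant: it varies by at most $L\delta_0/m$. Two cases arise. If $\rho(\theta_j)$ lies in a region where $\varphi((\rho-\rho_0)/\delta_1)\ge 1/2$, we get $-\sigma''(t_j)\ge c_0 m\delta/(2\delta_0)-C(L)$, which exceeds $K$ for $m$ large. Otherwise we must either restrict to curves crossing the middle of the rectangle, which is what the application needs since the tangency curves lie well inside, or stack the bump vertically.

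I expect this case issue to be the delicate point. My first attempt would be to observe that the intended application only uses curves whose values lie in the middle band of $[\rho_0,\rho_0+\delta_1]$. Failing that, I would replace $\mu_{\rho_0}$ by a function equal to $1$ on the whole range relevant for the curves, enlarging the rectangle slightly in $\rho$. The remaining estimates are routine.
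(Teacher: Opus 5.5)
\textbf{Overall.} Your construction is the paper's: the same $h_j$ from \eqref{difeomorphism_h} placed on $R_j$, the same expansion of $\sigma''$, and the same observation that $u_{\theta\theta}=\frac{m\delta}{\delta_0^2}\varphi''\varphi$ is the only term of order $m$. Steps 1 and 2 are correct. Your sign check is a genuine correction. With $N=(b,-a)$ one has $\langle\gamma'',N\rangle=-\sigma''$, so $t_j=\theta_j+s_*\delta_0/m$ must be taken where $\varphi''(s_*)<0$. The paper instead picks a point with $\varphi''>0$, and its displayed formula has the opposite sign. Also, your lower bound should read $c_0m\delta/(2\delta_0^2)$, not $c_0m\delta/(2\delta_0)$.

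\textbf{The Step 3 issue is a counterexample, not a delicate case.} The lemma as stated is false, so no argument can close your ``otherwise'' case.
\begin{itemize}
\item The curve $\rho\equiv\rho_0$ is admissible. Since $h_j-\mathrm{Id}$ vanishes off $R_j$, continuity forces $h_j=\mathrm{Id}$ on $\partial R_j$. Hence $\gamma(\theta)=(\theta,\rho_0)$ and $\langle\gamma'',N\rangle\equiv 0<K$, for every $m$ and every choice of $h_j$.
\item It also fails inside the open strip. For any fixed $C^2$ map $h_j$, the constant curves $\rho\equiv\rho_0+\eta$ give $\langle\gamma'',N\rangle\to 0$ uniformly as $\eta\to 0$.
\end{itemize}
The paper's proof misses this for two reasons. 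It fixes $\rho$ before letting $m\to\infty$, so its $m$ depends on $\rho$, contrary to the order of quantifiers. It also tacitly assumes $\varphi(s_\theta)$ is bounded away from $0$.

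\textbf{What you should do.} Commit to the corrected statement rather than hedging. Restrict to curves with values in $[\rho_0+\eta\delta_1,\rho_0+(1-\eta)\delta_1]$ and let $m=m(\varepsilon,K,L,\eta)$. Take $\varphi$ positive on $(0,1)$ and set $c_\eta=\min_{[\eta,1-\eta]}\varphi>0$. Your own estimate then gives $-\sigma''(t_j)\ge c_0c_\eta m\delta/\delta_0^2-C$, where $C$ is independent of $m$ and $\rho$, so the bound is uniform over all admissible curves. Neither a plateau for $\varphi$ nor near-constancy of $\rho$ on $R_j$ is needed, because only the single point $t_j$ matters.

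\textbf{Which of your two remedies to use.} The band restriction is the one compatible with the application. In Theorem~\ref{theorem:F_n_dense}, the radial range $[1,2]$ of $R_j$ is essentially a fundamental domain of $\rho\mapsto 2\rho$. Lemma~\ref{lemma:stable_unstable_perturbation} needs every orbit to meet the support at most once. So the support cannot be enlarged vertically past $[1,2]$ to cover all curves. Instead, tangency curves near the radial edges of $D_1$ are meant to be handled by the second perturbation in $D_2$.
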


\begin{proof}
Part (\ref{lemma:item_i}). Let $m\in\N$ and note that for $j\in\{1, \cdots, m\}$ we have
\begin{eqnarray*}
&\gamma(\theta)=h_j(\theta,\rho(\theta))=\big(\theta,\rho(\theta)+\mu_{\theta_j}(\theta)\mu_{\rho_0}(\rho(\theta))\big),&\\
&\gamma'(\theta)=\big(1,\rho'(\theta)+\mu_{\theta_j}'(\theta)\mu_{\rho_0}\big(\rho(\theta)\big)+\mu_{\theta_j}(\theta)\mu'_{\rho_0}\big(\rho(\theta)\big)\rho'(\theta)\big),&\\
&\gamma''(\theta)=(0,\rho''(\theta)+\mu_{\theta_j}''\mu_{\rho_0}+2\mu'_{\theta_j}\mu'_{\rho_0}\rho'(\theta)+\mu_{\theta_j}\mu''_{\rho_0}(\rho'(\theta))^2+\mu_{\theta_j}\mu'_{\rho_0}\rho''(\theta)).&
\end{eqnarray*}
%in $R_j$ with $1\leq j\leq m$. 
Without loss of generality suppose that $||\gamma'(\theta)||=1$. Then, $\big\langle\gamma''(\theta),N(\theta)\big\rangle$ is given by 
\begin{equation*}
\rho''\!+m\frac{\delta}{\delta^2_0}\varphi''\!(t_\theta\!)\varphi\!(s_\theta\!)+\frac{2\delta}{\delta_0\delta_1}\varphi'\!(t_\theta)\varphi'\!(s_\theta)\rho'+\frac{\delta}{m\delta_1^2}\varphi(t_\theta)\varphi''(s_\theta)(\rho')^2\!+\frac{\delta	}{m\delta_1}\varphi(t_\theta)\varphi'(s_\theta)\rho''
\end{equation*}
with $\rho=\rho(\theta)$, and angular coordinates
$$t_\theta=m\left(\frac{\theta-{\theta_j}}{\delta_0}\right) \,\,\,\,\,\, \text{and} \,\,\,\,\,\, s_\theta=\frac{\rho(\theta)-{\rho_0}}{\delta_1}.$$ Consider a point \mbox{$\tilde\theta_j>0$} such that $\varphi'(t_{\tilde\theta_j})>0$ and $\varphi''(t_{\tilde\theta_j})>0$ (correspondent to the first inflection point of $\varphi'$, see Figure~\ref{fig:varphi}). Observe that fixed $\varepsilon, K,\delta_0,\delta_1$, and $\rho$, we have $$\big\langle\gamma''(\tilde\theta_j),N(\tilde\theta_j)\big\rangle\to+\infty, \,\,\,\,\,\, m\to+\infty.$$ Therefore, we may choose $m>0$ great enough so that
$\big\langle\gamma''(\tilde\theta_j),N(\tilde\theta_j)\big\rangle>K$, which concludes the proof of this part. 

Part (\ref{lemma:item_ii}). From the derivative of $h$ in (\ref{derivative_of_h}) and by (\ref{inequality_epsilon}),  we have  that
\begin{eqnarray*}
|\mu_{\theta_j}(\theta)\mu_{\rho_0}(\rho)|\leq\varepsilon/2, &|\mu'_{\theta_j}(\theta)\mu_{\rho_0}(\rho)|\leq\varepsilon/2, &|\mu_{\theta_j}(\theta)\mu'_{\rho_0}(\rho)|\leq\varepsilon/2.\\
\end{eqnarray*}
It can be verified that
\begin{eqnarray*}
\|h_j-Id\|\leq\varepsilon\\
\|Dh_j-Id\|\leq\varepsilon.
\end{eqnarray*}
Thus, $h\in B_\varepsilon(Id)$  which completes the proof.
\end{proof}

\subsection{Local Density of Cw-expansivity}
Consider the neighborhood $\mathcal{U}$ of $f_0$ given by the upper semicontinuity of $T$ for a neighborhood of $T(f_0)$.
We will perturb inside the annulus $\widetilde{A}$, in the rectangles expressed in polar coordinates as
\[
R = [\theta_0, \theta_0 + \delta_0] \times [\rho_0, \rho_0+\delta_1];
\]
$$R_j=[\theta_j,\theta_j+\delta_0/m]\times[\rho_0,\rho_0+\delta_1]$$
as in  Equation (\ref{spliting_rectangles}), see Figure~\ref{fig:rectangles}, with $r,\delta_0,\delta_1 > 0$ and $\theta_0\in\mathbb{R}$, $m>0$,  \mbox{$0\leq\theta_j-\theta_0\leq\delta_0$}, $1\leq j\leq m$. For $\nu>0$, let
\[
\begin{array}{cc}
D_0(\nu) = \bigl[\tfrac{\pi}{4}-\tfrac{\nu}{6}, \tfrac{\pi}{4}+\tfrac{\nu}{6}\bigr]\times \bigl[\tfrac{1}{8}, 8\bigr],
&
D_1(\nu) = \bigl[\tfrac{\pi}{4}-\tfrac{\nu}{3}, \tfrac{\pi}{4}+\tfrac{\nu}{3}\bigr]\times [1,2]
\\[6pt]
D_2(\nu) = \bigl[\tfrac{\pi}{4}-\tfrac{\nu}{6}, \tfrac{\pi}{4}+\tfrac{\nu}{6}\bigr]\times \bigl[\tfrac{3}{2},3\bigr],
&
D_3(\nu) = \bigl[\tfrac{\pi}{4}-\tfrac{\nu}{2}, \tfrac{\pi}{4}+\tfrac{\nu}{2}\bigr]\times \bigl[\tfrac{1}{8}, 8\bigr].
\end{array}
\]

\begin{figure}[!htb]
\centering
\includegraphics[width=1\linewidth]{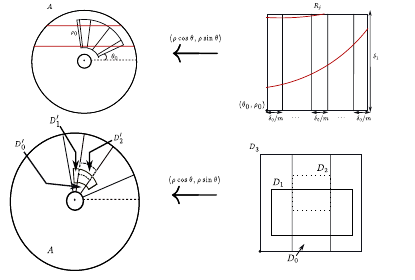}
\caption{\small Perturbation Rectangles.}
\label{fig:rectangles}
\end{figure}

When there is no risk of confusion, we omit the dependence on $\nu$ in the notation. 
Note that in $D_3$ the local stable manifolds of any $g$, can be written as graphs over the angular coordinate $\theta$, by the transversality between these and the \emph{radial lines}  $\theta=\const$, for $\nu$  sufficiently small.

\begin{remark}
%Let $\gamma:I\to M$ be a curve, such that $|\gamma'(t)|\neq0$ for all $t\in I$. 
In the case of diffeomorphism in $\mathcal{W}^1$, the stable and unstable manifolds are curves $\gamma:I=[\theta,\theta']\to M$ such that $|\gamma'(t)|\neq0$ for all $t\in I$. Therefore, we may suppose that these manifolds are parametrized by arc length, defined as $\ell(\gamma)(t)=\int_{\theta}^{t}\|\gamma'(\tau)\|d\tau$. 
\end{remark}

\begin{lemma}\label{lemma:diam_less_n}
Let $\gamma\colon I\to R$ be a curve of class $C^1$ that is a graphic in $R$. For $m>0$ sufficiently large, if $\ell(\gamma)>\frac{\delta_0}{m\cos\theta^*}$, then $$\gamma\not\subset R_j;$$
where $\theta^*=\max_{|t-\theta_j|\leq\delta_0/m}\{|\arccos\langle\frac{\gamma'(t)}{\|\gamma'(t)\|},i\rangle|; 1\leq j\leq m$ and $i=(1,0)\}$.
\end{lemma}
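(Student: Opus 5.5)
The plan is to bound the arc length of any piece of $\gamma$ lying in a single thin rectangle $R_j$ from above, and then take the contrapositive. The idea is that a graph over the $\theta$-coordinate which stays inside $R_j$ has horizontal extent at most $\delta_0/m$. Its length is then controlled by that width divided by the cosine of the maximal angle between $\gamma'$ and the horizontal direction $i=(1,0)$.

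First, I will use that $\gamma$ is a graph over $\theta$. Then $\gamma(t)=(\theta(t),\rho(\theta(t)))$ with $\theta$ monotone, and after reparametrizing we may take $\theta$ itself as the parameter. Suppose, for contradiction, that $\gamma\subset R_j$. The projection of $\gamma$ to the $\theta$-axis is then contained in $[\theta_j,\theta_j+\delta_0/m]$, so the total horizontal displacement is at most $\delta_0/m$.

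Next, I will write the arc length as
$$\ell(\gamma)=\int_I\|\gamma'(t)\|\,dt,$$
and use the identity
$$\langle\gamma'(t),i\rangle=\|\gamma'(t)\|\cos\alpha(t),$$
where $\alpha(t)=\arccos\langle\gamma'(t)/\|\gamma'(t)\|,i\rangle$ is the angle with the horizontal. Because $\gamma$ is a graph, $\langle\gamma'(t),i\rangle$ has constant sign, which we may take to be positive. By definition $|\alpha(t)|\le\theta^*$ on the relevant range $|t-\theta_j|\le\delta_0/m$, so $\cos\alpha(t)\ge\cos\theta^*$. This gives
$$\ell(\gamma)\le\frac{1}{\cos\theta^*}\int_I\langle\gamma'(t),i\rangle\,dt\le\frac{\delta_0}{m\cos\theta^*}.$$
That contradicts the hypothesis $\ell(\gamma)>\delta_0/(m\cos\theta^*)$, and so $\gamma\not\subset R_j$.

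The role of ``$m$ sufficiently large'' is to guarantee $\theta^*<\pi/2$, so that $\cos\theta^*>0$ and the bound makes sense. I expect this to be the main point needing care. In $D_3$ the local stable and unstable curves are transverse to the radial lines and are graphs over $\theta$, so their slope relative to $i$ is bounded by some $\alpha_0<\pi/2$ on compact pieces. Choosing $m$ large makes the windows $|t-\theta_j|\le\delta_0/m$ thin, and by uniform continuity of $\gamma'$ the maximal angle over each window stays within $\alpha_0$ plus a small margin, hence below $\pi/2$. I will also note that the statement is in polar coordinates, so ``horizontal'' means the $\theta$-direction in the $(\theta,\rho)$ chart, and the estimate is done entirely in that chart.
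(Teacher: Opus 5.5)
Your proposal is correct and is essentially the paper's proof: assume $\gamma\subset R_j$, use $\langle\gamma',i\rangle=\|\gamma'\|\cos\alpha\ge\|\gamma'\|\cos\theta^*$ to bound $\ell(\gamma)$ by $\frac{1}{\cos\theta^*}$ times the horizontal extent, which is at most $\delta_0/m$ (the paper simply parametrizes by $\theta$ so that $\langle\gamma',i\rangle=1$). One small remark: $\cos\theta^*>0$ already follows from $\gamma$ being a $C^1$ graph over a compact interval, since its slope is bounded, so the largeness of $m$ plays no role in this argument, and the paper's proof does not use it.
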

\begin{proof}
We suppose that $\gamma\subset R_j$ for some $j=1,\cdots,m$.
%and that $$\ell(\gamma)=\int_{\theta_j}^{t}\|\gamma'(\tau)\|d\tau>\frac{\delta_0}{m\cos\theta*},$$
%with $t\in[\theta_j,\theta_j+\delta_0/m]$. 
Since $\gamma$ is a graphic of the form $(\tau, \gamma_V(\tau))$, we have $\gamma'(\tau) = (1, \gamma_V'(\tau))$ and $\langle \gamma'(\tau), i \rangle = 1$. By definition of the cosine of the angle $\theta$ between $\gamma'(\tau)$ and $i$, we have
$$\cos \theta = \frac{\langle \gamma'(\tau), i \rangle}{\|\gamma'(\tau)\| \cdot \|i\|} = \frac{1}{\|\gamma'(\tau)\|},$$
hence $\|\gamma'(\tau)\| = \frac{1}{\cos \theta}$. Since %$\|\gamma'(\tau)\|\geq1$ and 
$\frac{1}{\|\gamma'(\tau)\|}=\cos\theta\geq\cos\theta*$, we have 
$$\ell(\gamma)=\int_{I}\|\gamma'(\tau)\|d\tau\leq\frac{1}{\cos\theta^*}\int_{I}d\tau\leq\frac{\delta_0}{m\cos\theta^*}.$$
This proves the contrapositive and concludes the proof.
\end{proof}
Consider the neighborhood $\mathcal{U}$ of $f_0$ in $\mathcal{W}^1$. By taking $\nu > 0$ sufficiently small and using the upper semicontinuity of $T$, we may assume that
\[
T(f) \subset \mathrm{int}(D_0) \quad \text{for all } f \in \mathcal{U},
\]
since $T(f_0)=\{(x_1,x_2)\in A \colon|x_2|=x_1\}$, and that the stable and unstable manifolds are graphics in $D_3$.

The following lemma ensures that, through a perturbation, one can obtain connected components of the dynamical ball with controlled diameter as in Figure~\ref{fig:h1}.

\begin{figure}[!htb]
\centering
\includegraphics[width=1\linewidth]{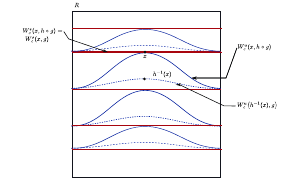}
\caption{\small The perturbation $h\circ g$ \emph{pushes up} the unstable manifolds of $g$ in $R$ while preserving the stable manifolds.}
\label{fig:h1}
\end{figure}

We will use the techniques developed for $h$, which is defined in $\R^2$, to perturb a diffeomorphism of the bitorus $M$. Formally, we should consider local charts as in \S\ref{secDefDA}, but we abuse the notation to keep it simpler.

\begin{lemma}\label{lemma:stable_unstable_perturbation}
Let $g\in\mathcal{U}$ and $R_j\subset D_3$ 
be a rectangle in polar coordinates contained in the annulus $\widetilde{A}$ such that any orbit that intersects $R_j$ does so at most once. If $h$ is a diffeomorphism as in (\ref{difeomorphism_h}) with $h=\mathrm{Id}$ outside $R_j$, then for all $x\in R_j$ we have
\begin{eqnarray*}
&W^s_c(x,h\circ g)\cap R_j=W^s_c(x,g)\cap R_j,&\\
&W^u_c(x,h\circ g)\cap R_j\subset h\Big(W^u_c\big(h^{-1}(x),g\big)\Big)\cap R_j.&
\end{eqnarray*}
\end{lemma}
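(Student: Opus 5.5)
The whole argument rests on one observation about the hypothesis that an orbit meets $R_j$ at most once. Write $f=h\circ g$. Since $h=\mathrm{Id}$ outside $R_j$, we have $f(z)=g(z)$ whenever $g(z)\notin R_j$. More precisely, along any orbit the maps $f$ and $g$ differ only at the single step whose image lands in $R_j$. The plan is to compare forward orbits, for the stable sets, and backward orbits, for the unstable sets, of points of $R_j$ under $f$ and under $g$.

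\emph{Stable sets.} Let $y\in R_j$. I claim that $f^k(y)=g^k(y)$ for every $k\geq 0$. Indeed, $y$ is the unique point of its $g$-orbit lying in $R_j$, so $g^k(y)\notin R_j$ for $k\geq1$, and inductively
$$f^{k}(y)=h(g(f^{k-1}(y)))=h(g^{k}(y))=g^k(y).$$
The same identity holds for $x\in R_j$. Hence, for $x,y\in R_j$, the conditions $d(f^k(y),f^k(x))\leq c$ for all $k\geq 0$ and $d(g^k(y),g^k(x))\leq c$ for all $k\geq0$ coincide, which gives $W^s_c(x,h\circ g)\cap R_j=W^s_c(x,g)\cap R_j$.

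One caveat: I would make explicit that the hypothesis is applied to the $g$-orbit of $y$. One should also check that $f$-orbits meet $R_j$ at most once. This holds because on a forward orbit starting in $R_j$ the $f$- and $g$-orbits coincide, as just shown.

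\emph{Unstable sets.} Now iterate backwards. For $y\in R_j$ we have
$$f^{-1}(y)=g^{-1}(h^{-1}(y)),$$
and $h^{-1}(y)\in R_j$ because $h$ preserves $R_j$. Write $y'=h^{-1}(y)$. Since $y'\in R_j$, its backward $g$-orbit avoids $R_j$, so $g^{-k}(y')\notin R_j$ for $k\geq1$. It then follows inductively that $f^{-k}(y)=g^{-k}(y')$ for all $k\geq1$; the only thing to check here is that $h^{-1}$ acts trivially at each later step.

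Now take $y\in W^u_c(x,f)\cap R_j$ and set $x'=h^{-1}(x)$. For $k\geq1$,
$$d\big(g^{-k}(y'),g^{-k}(x')\big)=d\big(f^{-k}(y),f^{-k}(x)\big)\leq c .$$

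The main obstacle is the case $k=0$. We need $d(y',x')\leq c$, whereas we only know $d(y,x)\leq c$. There are two ways to close this gap.
\begin{enumerate}
\item Use that $h$ is $C^0$-close to the identity and that $R_j$ has diameter much smaller than $c$. Then $d(y',x')\leq \operatorname{diam} R_j<c$ directly, so $y'\in W^u_c(x',g)$.
\item Alternatively, use the third property of $\mathcal{U}_0$. This lets us argue through the leaf structure: $y'$ lies on the same $g$-unstable leaf as $x'$, and close to it.
\end{enumerate}
I would adopt the first option, shrinking $\delta_0,\delta_1$ if needed (the rectangle can be taken small relative to $c$).

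With $y'\in W^u_c(x',g)$ established, we get $y=h(y')\in h\big(W^u_c(h^{-1}(x),g)\big)\cap R_j$. This is the desired inclusion.
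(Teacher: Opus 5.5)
Your argument is the paper's. Forward orbits of points of $R_j$ coincide for $h\circ g$ and $g$. For $k\geq 1$ one has $(h\circ g)^{-k}(y)=g^{-k}(h^{-1}(y))$, because $h^{-1}(y)\in R_j$ and its backward $g$-orbit never returns to $R_j$.

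The $k=0$ issue you isolate is genuine, and the paper's proof does not address it. The paper asserts $(h\circ g)^{-n}(z)=g^{-n}(h^{-1}(z))$ for all $n\geq 0$ and then transfers the bound $\leq c$. For $n=0$ this identity reads $z=h^{-1}(z)$, which is false at points of $R_j$ moved by $h$. So you are not missing an idea; the statement itself needs something more.

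Be clear, however, that your remedy $\diam R_j< c$ is an additional hypothesis, not a consequence of those in the lemma. From $d(y,x)\leq c$ alone, nothing prevents $h^{-1}$, which is not an isometry, from pushing $d(h^{-1}(y),h^{-1}(x))$ above $c$. Your option (2) does not close this either: lying on the same $g$-unstable leaf gives no bound on the $k=0$ distance. As written, you prove the lemma under the extra assumption $\diam R_j\leq c$, and you should say so explicitly.

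This assumption is not free in context. In the density argument the rectangles are $[\theta_j,\theta_j+\delta_0/m]\times[1,2]$, whose radial side is not small. Shrinking $\delta_1$ would force you to use several radially stacked rectangles and a version of the lemma for their union. There are two ways to keep $R_j$ as it is:
\begin{itemize}
\item Conclude instead that $W^u_c(x,h\circ g)\cap R_j\subset h\big(W^u_{\lambda c}(h^{-1}(x),g)\big)\cap R_j$, where $\lambda\geq 1$ is a Lipschitz constant of $h^{-1}$. This $\lambda$ is close to $1$ because $h$ is $C^1$-close to the identity, and the change is harmless if $c$ is chosen with some room in the definition of $\mathcal{U}_0$.
\item Prove the leaf-level inclusion $W^u(x,h\circ g)\cap R_j\subset h\big(W^u(h^{-1}(x),g)\big)$. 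Here only $k\to\infty$ matters, and your computation applies verbatim.
\end{itemize}
The leaf-level version is essentially all the density proof uses, since it only needs the unstable leaves of $h\circ g_0$ to be the $h$-images of those of $g_0$.
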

\begin{proof}
For each $z\in W^s_\varepsilon(x,h\circ g)\cap R$ and $n\geq0$ 
%we will show that the stable manifold for the perturbation $h \circ g$ is the samea as for $g$. Therefore, for the $n$-th iterate and using the fact that the orbits of $g$ intersect $R$ at most once, we have
we have
$$(h\circ g)^n(z)=h\circ g\circ\cdots\circ h\circ g(z)=g^n(z)$$ 
since orbits of $g$ intersect $R$ at most once and $h=Id$ outside $R$.
This ensures that $$W^s_c(x,h\circ g)=W^s_c(x,g).$$
Now for each $z\in W^u_c(x,h\circ g)\cap R$ and $n\geq0$, we have
$$(h\circ g)^{-n}(z)=g^{-1}\circ h^{-1}\circ\cdots\circ g^{-1}\circ h^{-1}(z)=g^{-n}(h^{-1}(z))$$ 
since orbits of $g^{-1}$ intersect $R$ at most once and $h^{-1}=Id$ outside $R$. Thus,
$$\dist\!\big(g^{-n}(h^{-1}(z)),g^{-n}(h^{-1}(x))\big)=\dist\!\big((h\circ g)^{-n}(z)),(h\circ g)^{-n}(x))\big)\leq c,$$
ensuring that 
$h^{-1}(z)\in W^u_c\big(h^{-1}(x), g\big)$ and consequently that
$$W^u_c(x,h\circ g)\cap R\subset h\Big(W^u_c\big(h^{-1}(x), g\big)\Big)\cap R,$$ concluding the proof.
\end{proof}

Lemmas \ref{lemma:diam_less_n} and \ref{lemma:stable_unstable_perturbation} will be enough to ensure the dynamical ball $\Gamma_c(x)$ can be broken inside the rectangle and that each of its connected components are sufficiently small.

%\begin{lemma}[\cite{palismelo} Proposition 2.7]\label{c_2_dense_c_1}
%Let $M$ and $N$ be abstract manifolds with $M$ compact, $0\leq r<\infty.$
%Then 
%The subset of maps of class $C^{\infty}$ is dense in $C^r(M,N).$
%\end{lemma}

%\noindent The following theorem is a preliminary step for obtaining the residual set of $Cw$-expansive diffeomorphisms.

\begin{theorem}\label{theorem:F_n_dense}
The set $F_n$ is dense in $\mathcal{U}$ for all $n\in\N$.
%$$ \overline{F}_n = \mathcal{U}.$$
\end{theorem}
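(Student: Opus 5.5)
Given $g\in\mathcal U$, $n\in\N$ and $\varepsilon>0$, the plan is to build $\tilde g=h\circ g$, with $h$ a composition of the local maps $h_j$ of Lemma~\ref{lemma:curvatura}, so that $\tilde g$ is $\varepsilon$-close to $g$ and lies in $F_n$.

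\emph{Step 1: reduce to curves of tangency in the annulus.} First I would replace $g$ by a nearby $C^2$ diffeomorphism, still in $\mathcal U$, so that its stable and unstable leaves are $C^2$ graphs in $D_3$ with $|\rho'|,|\rho''|<L$ for a uniform $L$. For $x\in M$, the set $\com_x\Gamma^g_c(x)$ is a continuum contained in $W^s_c(x)\cap W^u_c(x)$. By the choice of $c$, both sets are arcs of leaves. A nondegenerate continuum in the intersection of two arcs is therefore a common subarc. Along such a subarc the two leaves are tangent, so it lies in $T(g)\subset \mathrm{int}(D_0)$.

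Every nontrivial component of a dynamical ball is an iterate of such an arc. Since $\Omega(g)\cap\widetilde A=\emptyset$, there is a fundamental domain inside $\widetilde A$. I would choose the rectangles $R=[\theta_0,\theta_0+\delta_0]\times[\rho_0,\rho_0+\delta_1]$ covering the relevant part of $D_0$ (for instance, $D_1$ or $D_2$ in radial width) so that each orbit meets each $R_j$ at most once. This makes Lemma~\ref{lemma:stable_unstable_perturbation} applicable. A uniform bound on $Dg^{\pm k}$ over the finitely many iterates needed to cross $\widetilde A$ then reduces the goal to making every common stable/unstable arc inside $R$ shorter than some $\eta$.

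\emph{Step 2: choose $m$ and the local perturbations.} Fix $K>2L$, and take $m$ from Lemma~\ref{lemma:curvatura} so large that $\delta_0/(m\cos\theta^*)<\eta$ as well. Put $h=h_1\circ\cdots\circ h_m$. The $h_j$ have disjoint supports, so $h$ stays $C^1$-close to the identity. By Lemma~\ref{lemma:stable_unstable_perturbation}, the stable leaves of $h\circ g$ in each $R_j$ coincide with those of $g$, which have curvature below $L$. The unstable leaves of $h\circ g$ in $R_j$ are contained in $h_j$-images of unstable graphs of $g$. By Lemma~\ref{lemma:curvatura}(1), each of these has a point in every $R_j$ where its curvature exceeds $K$.

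At that point the two leaves cannot share a common subarc through it. So any common arc of a stable and an unstable leaf is contained in a single $R_j$ (minus a point). By Lemma~\ref{lemma:diam_less_n} its length is at most $\delta_0/(m\cos\theta^*)<\eta$.

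\emph{Main obstacle.} The delicate step is Step 2. A common arc might avoid the particular curvature point $t_j$ of $R_j$, or the pushed-up unstable leaf might acquire new tangencies in the parts of $R_j$ where the bump is flat.

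I would handle this by strengthening Lemma~\ref{lemma:curvatura}. The aim is that the curvature of the perturbed unstable graph differs from $\rho''$ by more than $2L$ on all of $R_j$ except a set of $\theta$-measure much smaller than $\delta_0/m$. This is done by using two staggered families of bumps, which shifts the problem to subrectangles of width $\delta_0/(2m)$. Alternatively, one can use the fact that $\varphi''$ vanishes only at isolated points, which gives non-coincidence of second derivatives off finitely many points per strip. Either way, coincidence arcs are confined to intervals of length $O(\delta_0/m)$.

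Finally I would check two things. First, the perturbation introduces no tangencies outside $D_0$: outside $D_0$ the leaves are transverse, and the upper semicontinuity in Lemma~\ref{lemma:T(f)upper_semicontinuous} keeps them transverse for small perturbations. Second, $h\circ g\in\mathcal U$. Together these give $h\circ g\in F_n$.
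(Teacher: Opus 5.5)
Your construction is the paper's: a $C^2$ approximation, a splitting of $D_1$ into $m$ thin strips, the bumps of Lemma~\ref{lemma:curvatura}, Lemma~\ref{lemma:stable_unstable_perturbation} to keep the stable leaves fixed while each unstable leaf gets a point of curvature $>K$ in every strip, and Lemma~\ref{lemma:diam_less_n} to bound lengths. However, the obstacle you single out is not a real one, and the step that does need care is missing. No strengthening of Lemma~\ref{lemma:curvatura} is required. A common arc cannot contain in its interior a point where its unstable leaf has curvature $>K$. So it lies between two consecutive such points and meets at most two adjacent strips (not a single $R_j$, as you state). Its length is then at most $2\delta_0/(m\cos\theta^*)$, and making this $<1/n$ is exactly the paper's remark that no long arc lies in three consecutive $R_j$. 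Whether the arc meets $t_j$, or where the bump is flat in $\theta$, is irrelevant.

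The genuine gap is at the radial edges of $R$, i.e.\ along the boundary of the fundamental domain. To use Lemma~\ref{lemma:stable_unstable_perturbation}, every orbit may meet $R$ only once, so $R$ is at most one fundamental domain thick. Near its top and bottom the perturbation is ineffective. The curvature gained is of order $m\delta\delta_0^{-2}\varphi''(t_\theta)\varphi(s_\theta)$ with $s_\theta=(\rho(\theta)-\rho_0)/\delta_1$, and this tends to $0$ as $s_\theta\to 0,1$. For $\rho\equiv\rho_0$ the map $h_j$ does nothing, so Lemma~\ref{lemma:curvatura}(1) cannot be applied uniformly up to the edges. Leaves exiting $R$ through its top or bottom are not graphs $[\theta_0,\theta_0+\delta_0]\to[\rho_0,\rho_0+\delta_1]$ at all. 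Moreover, $g$ does not in general map the circle $\rho=\rho_0$ onto $\rho=\rho_0+\delta_1$. Hence a polar rectangle met at most once by each orbit leaves a region, not shrinking with $m$, whose orbits never enter $R$. Your reduction ``it suffices to shorten common arcs inside $R$'' therefore does not control tangency arcs near this boundary. The paper deals with this by applying the same procedure a second time in $D_2$, whose radial range $[3/2,3]$ straddles $g_0(\mathbb{S}^1\cap D_0)$, where the $D_1$-perturbation degenerates; you need this step too. Note that an orbit can meet both $D_1$ and $D_2$, so the second perturbation changes stable leaves in part of $D_1$. One way to keep control is to make the second perturbation $C^1$-small enough that the property obtained in $D_1$ persists: all components of $\Gamma_c(x)\cap E$ are short, for a compact $E$ in the effective part of $D_1$. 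This is an open condition by the argument of Lemma~\ref{open}.
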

\begin{proof}
Let $g \in \mathcal{U}$, \( n\in\N \), and consider a ball $B_1(g,\varepsilon) \subset \mathcal{U}$ in $\diff^1(M)$. Choose a diffeomorphism $g_0 \in B_1(g, \varepsilon)$ of class $C^2$, see \cite{palismelo}*{Proposition 2.7}. This ensures that stable/unstable leaves of $g_0$ are also $C^2$ and we can consider their curvatures. The Invariant Manifold Theorem ensures that these curvatures are bounded (see \cite{takens}*{Theorem 5}), that is,
$$K:=\max_{x\in D_1}\{k(x); \,\,k(x) \text{ is the curvature of } W^s(x,g_0) \text{ at } x\}<\infty.$$
Take 
%If $T(g_0)=\emptyset$, then %$\widetilde{A}\subset\tpitchfork(g_0)$, so $g_0 \in F_n $ and %we are done. 
%Otherwise,
%it is possible to choose 
$m\in\N$  
so that the rectangle $D_1$ is  divided into finitely many subrectangles of the form
$$R_j=[\theta_j,\theta_j+\delta_0/m]\times[1,2],$$
with $\delta_0>0$ small, as in the Figure~\ref{fig:phases}, such that
$$\ell(W^s_c(x,g_0)\cap R_j)\leq\frac{\delta_0}{m\cos\theta^*}\leq\frac{1}{2n}$$
for all $x\in R_j$ and $j\in\{1,\dots,m\}$. This can be done since the stable leaves of $g_0$ in $D_1$ are $C^1$-close to the stable leaves of $f_0$, which are horizontal in $D_1$. We may assume that inside $D_3$ the stable and unstable manifolds are graphs of smooth functions $(\theta,\rho(\theta))$ over the polar coordinate which are $C^1$-close, so there is a common bound $L$ for their first and second derivatives. Without loss of generality we may suppose that $m$ also satisfies Lemma~\ref{lemma:curvatura}\eqref{lemma:item_i} for the numbers $\eps$, $K$, and $L$ as above.
   
%Now fix one rectangle $R_j$.
%as given above.  
It is possible to construct a diffeomorphism $h$ 
%such that $h-Id$ is supported in $R_j$, 
arbitrarily $C^1$-close to the identity supported in $D_1$ which is the composition of $m$ distinct $h_j$ given by Lemma \ref{lemma:curvatura} applied to each $R_j$. Also, for each $j\in\{1,\dots,m\}$ and each unstable manifold $\rho$ inside $R_j$ there is $\tilde\theta_j \in [\theta_j,\theta_j+\delta_0/m]$ so that $h\circ g_0$ pushes the unstable manifold upward ensuring that its curvature is larger than $K$ at $\tilde\theta_j$ and that if $(\tilde\theta_j(\rho),\rho)\in W^s(x,h\circ g_0)$ for some $x\in D_1$, then $(\tilde\theta_j(\rho),\rho)\in\tpitchfork(h_j\circ g_0)$.
%to break the dynamical ball. 
%For this purpose, take the curvature greater than  
%$$K=\max_{x\in D_1}\{k(x): k(x) \text{ curvature of } W^s(x,g_0) \text{ at } x\},$$ maximum curvature  $K<\infty$.
%in $\theta_{j,0}$.

Lemma~\ref{lemma:stable_unstable_perturbation} ensures that the stable manifolds of $h\circ g_0$ in $D_1$ equal the ones of $g_0$, while the unstable manifolds are pushed above to break the dynamical balls. Since $\Gamma^{h\circ g_0}_c(x)\subset W^s(x,g_0)$,
%,  then we have $$\Gamma_c^{h\circ g_0}(x)\subset \Gamma_c^{g_0}(x),$$ for all $x\in M$, because any orbit that intersects $R_j$ does so at most once.
Lemma~\ref{lemma:diam_less_n} ensures that the connected components of $\Gamma^{h\circ g_0}_c(x)$ inside $D_1$ satisfy
$$\diam(\com_{x}\Gamma^{h\circ g_0}_c(x)\cap R_j)\leq \ell(W^s_c(x,g_0)\cap R_j)\leq\frac{\delta_0}{m\cos\theta^*}\leq\frac{1}{2n}$$ for all $x\in R_j$ and $j\in\{1,\dots,m\}$, and also
%$R_j$  yields
%$$\diam(\com_{x}\Gamma^{h\circ g_0}_c(x)\cap R_j)<\frac{1}{2n}$$
%for all $x\in R_j,j=1,\dots,m$ and
$$\diam(\com_{x}\Gamma^{h\circ g_0}_c(x))<1/n$$
for all $x\in D_1$ since in $D_1$ there are no long arcs contained in dynamical balls in three consecutive $R_j$.

Finally, to eliminate possible long interval of tangencies  along the boundary $g_0(\mathbb{S}^1\cap D_0)$, we apply the same perturbation procedure inside the rectangle $D_2$. After these local modifications, we obtain a diffeomorphism
$$h\circ g_0\in B_1(g,\varepsilon)\cap F_n.$$
Since $g\in\mathcal{U}$ and $\varepsilon>0$ were arbitrary, we conclude that $F_n$ is dense in $\mathcal{U}$.
\end{proof}

\begin{figure}[!htb]
\centering
\includegraphics[width=1\linewidth]{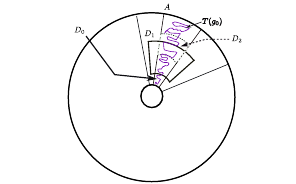}
\caption{\small Perturbations in the rectangles $D_1$ and $D_2$.}
\label{fig:phases}
\end{figure}

Finally, the proof of Theorem \ref{theorem:main_cw_exp1} goes as expected.

\begin{proof}[Proof of Theorem \ref{theorem:main_cw_exp1}]
Let \( \mathcal{R} = \bigcap_{n\geq 1} F_n \), where each \( F_n \) is defined as in (\ref{definition_F_n}). Since each $F_n$ is an open (Lemma \ref{open}) and dense (Theorem \ref{theorem:F_n_dense}) subset of $\mathcal{U}$, it follows that \( \mathcal{R}\) is a residual subset. Each element of \( \mathcal{R} \) is uniformly cw-expansive (Proposition \ref{proposition:residual}), with a common constant \( c > 0 \), and since $\Omega(g)\neq M$ for $g\in\mathcal{R}$, it follows that \( g \) is not expansive.
%\( \diff^1(M) \) is a Baire space, it follows that \( \mathcal{R}\) is a dense subset of \( \mathcal{U} \), 
Hence, every diffeomorphism in \( \mathcal{R} \) is cw-expansive but not expansive. 
\end{proof}

\begin{remark}
Since \( \diff^1(M) \) is a Baire space, this Theorem implies that 
%each set \( F_n \) is dense in \( \mathcal{U} \), and that 
the set of cw-expansive diffeomorphisms \( E_{cw}\cap\mathcal{U} \subset \mathcal{R} \) is dense in \( \mathcal{U} \).
\end{remark}

\vspace{+0.4cm}

\hspace{-0.45cm}\textbf{Acknowledgments.}
Alfonso Artigue was supported by ANII and PEDECIBA at Uruguay. Bernardo Carvalho was supported by CNPq project number 446192/2024. José Cueto was supported by CNPq grant number 317565/2025-7. This article is part of the Ph.D. thesis of José Cueto, under the supervision of the other authors, defended at the Federal University of Minas Gerais in November/2025.

\vspace{+0.4cm}

\hspace{-0.45cm}\textbf{Added in proof.} A recent work by Ziteng Ma \cite{ziteng} has shown that, generically, $n$-expansive surface diffeomorphisms must be Anosov. This article, which appeared online on November 17 after the defense of the above mentioned thesis, complements our results.

\vspace{1.0cm}
\noindent

{\em A. Artigue}
\vspace{0.2cm}

\noindent

Departamento de Matem\'atica y Estadística del Litoral,

Universidad de la República,

Gral. Rivera 1350, Salto, Uruguay
\vspace{0.2cm}

\email{artigue@unorte.edu.uy}

\vspace{1.0cm}
\noindent

{\em B. Carvalho}
\vspace{0.2cm}

\noindent

National Laboratory for Scientific Computing – LNCC/MCTI 

Av. Getúlio Vargas 333, CEP 25651-070, 

Petrópolis – RJ, Brazil

\vspace{0.2cm}

\email{bmcarvalho@lncc.br}

\vspace{1.0cm}
{\em J. Cueto}
\vspace{0.2cm}

\noindent

Departamento de Matem\'atica,

Universidade Federal de Minas Gerais - UFMG

Av. Ant\^onio Carlos, 6627 - Campus Pampulha

Belo Horizonte - MG, Brazil.

\vspace{0.2cm}

\noindent

National Laboratory for Scientific Computing – LNCC/MCTI 

Av. Getúlio Vargas 333, CEP 25651-070, 

Petrópolis – RJ, Brazil

\email{joscueto@lncc.br}

\end{document}